\documentclass[a4paper,12pt]{article}
\usepackage{hyperref}
\usepackage[english]{babel}
\usepackage[T1]{fontenc}
\usepackage[utf8]{inputenc}
\usepackage{amsfonts}
\usepackage{amssymb}
\usepackage{amsmath}
\usepackage{graphicx}
\usepackage{cite}
\usepackage{epsfig}
\usepackage{psfrag}
\usepackage{tikz}
\usetikzlibrary{matrix,arrows,decorations.pathmorphing,positioning} 
\usetikzlibrary{decorations.markings, arrows.meta}
\usepackage{url}
\usepackage{amsthm}
\usepackage{enumerate}
\usepackage{tikz-cd}

\newtheorem{proposition}{Proposition}[section]
\newtheorem{definition}{Definition}[section] 
  
\newtheorem{corollary}{Corollary}[section] 
\newtheorem{remark}{Remark}[section]  
\newtheorem{example}{Example}[section]

\newcommand{\Z}{\mathbb{Z}}
\newcommand{\Zb}{\mathbf{Z}}
\newcommand{\Lam}{\Lambda}
\newcommand{\calG}{\mathcal{G}}
\newcommand{\Ab}{\mathbf{Ab}}
\newcommand{\Ob}{\mathrm{Ob}}
\newcommand{\Hom}{\mathrm{Hom}}
\newcommand{\Mor}{\mathrm{Mor}}

\newcommand{\Fun}{\mathbf{Fun}}

\newcommand{\AbFun}{\mathbf{Ab\text{-}Fun}}

\newcommand{\Lm}{\Lambda_m}
\newcommand{\Lone}{\Lambda}
\newcommand{\AL}{A(L)}

\newcommand{\Set}{\mathbf{Set}}

\newcommand{\Gint}[1]{\int_{#1}}  
\newcommand{\m}{\mathfrak{m}}

\date{}

\title{\bf Multivariate Quandles as Groupoid Invariants}   

\author{Xerxes D. Arsiwalla$^{1, }$\footnote{\url{x.d.arsiwalla@gmail.com}}   {}  {}  {}  {}   Louis H. Kauffman$^{2, }$\footnote{\url{loukau@gmail.com}}    \\     
{ }  \\ 
{\it \small $^{1}$Wolfram Institute for Computational Foundations of Science,  USA}   \\
{\it \small $^{2}$University of Illinois at Chicago, USA} 
}

\begin{document}   
\maketitle

\begin{abstract}

We introduce a new structural framework for constructing multivariate Alexander quandles based on a groupoid ${\cal G}$, composed of a disjoint union of delooping groupoids of deck groups. Compared to standard Alexander modules over group rings $\Z[\Z]$, the  multivariate version involves modules over groupoid rings $\Z[{\cal G}]$. A manifestly categorical description is given in terms of  $\Z$-Algebroids, whose functorial module theory yields an equivalent definition of multivariate Alexander modules. Using the category of elements construction, we define multivariate Alexander quandles from this groupoid framework. Three classes of multivariate quandle operations follow.  In contrast to earlier approaches, we abstract the algebraic structure from explicit link-dependence. As a consequence, we have a structural framework for: (i) enumerating new classes of Alexander quandles, and (ii) finding new coloring invariants of links. We show that maps between $\Z$-modules, facilitated via multivariate quandle operations, realize a quiver presentation of quandles, which makes manifest the oidification of Alexander modules. Oidified Alexander modules, based on groupoids, provide a constructive framework for composing  univariate quandles to obtain new multivariate ones. Finally, we comment on the possibility of new link invariants coming from groupoid-based formulations, including the fundamental groupoid.  

\end{abstract}

\vspace{2pc}
{\it Keywords}: Alexander Quandles, Deck Groups, Groupoids, $\Z$-Algebroids, Link Colorings.  
  
\clearpage

\tableofcontents

\section{Introduction}

Quandles are algebraic structures, originally introduced by Joyce  \cite{joyce1982} and Matveev \cite{matveev1982}, independently. They axiomatize the well-known Reidemeister moves of knots and links, and for that reason, are useful tools to probe structural aspects and  topological invariants of three-dimensional manifolds (see \cite{el2015quandles} for an excellent pedagogical exposition). They are defined as:  
\begin{definition}[ \cite{joyce1982, matveev1982} ]  
\label{qaxioms}
A quandle is a set $Q$ equipped with a binary  operation $*$ (and a right-inverse operation $\bar{*}$) satisfying the following three axioms (for any $a, b, c \in Q$): 
\begin{enumerate}  
\item \text{Idempotence:}\footnote{The related notion of automorphic sets \cite{Brieskorn1988} or racks \cite{fenn1992racks} is obtained by relaxing the idempotence axiom. These become relevant when discussing invariants of framed links.}  $$a * a = a$$    
\item \text{Right-invertibility:}  $$(a * b) \;\bar{*}\; b = a = (a \;\bar{*}\; b) * b $$   
\item \text{Right-self-distributivity:}  
 $$ (a * b) * c = (a * c) * (b * c) $$  
 \end{enumerate} 
 \end{definition}
The fundamental quandle $Q(K)$ of a knot $K$ is a complete invariant of
oriented classical knots up to reverse oriented mirror image, meaning that non-equivalent knots have non-isomorphic fundamental quandles.  
Among the most important examples of quandle algebras are \emph{Alexander quandles}, which are defined upon $\Z$-modules over the Laurent polynomial ring  $\Lam := \Z[t, t^{-1}]$:  
 \begin{definition}[  \cite{joyce1982, matveev1982} ]
Given a $\Lam$-module $M$, the Alexander quandle over $M$ is defined by the operation 
\[
  a * b = t \, a + (1  -  t) \, b  
\] 
where $t$ is an invertible element of the $\Lam$-module (in the case of knots, $t$ is the generator of deck transformations of the infinite cyclic cover of the knot  complement). These are also referred to as \emph{affine Alexander quandles}.
 \end{definition}
For quandles of finite order, the module $M$ is expressed as a quotient  $\Z_n[t, t^{-1}]/(p(t))$, where  $p(t)$ is a non-zero polynomial such that $t$ is  invertible in the quotient.  For instance,  when $p(t) = t - k$, for some integer $k$ coprime to $n$, one obtains an    
Alexander quandle of order $n$, with quandle operation
$  a * b = k \, a + (1 - k) \, b \;  \pmod{n} $.      
More generally, with polynomials $p(t)$ of degree $m$, one obtains Alexander quandles of order $n^m$, where the parameter $t$ remains a variable in the module. For what follows here, we shall focus exclusively on quandles of finite order. 

The definition above refers to typical (also called standard) Alexander quandles, involving a single invertible element $t \in M$ (alternatively, the generator of the deck group).  More recently, in a series of papers  by Traldi  \cite{traldi0, traldi1, traldi2, traldi3, traldi4, traldi5, traldi6},  a notion of multivariate Alexander quandles for $m$-component links has been introduced. These involve multiple invertible elements $t_1, ... , t_m$, where each $t_i$ is associated to the $i^{th}$ component of the link $L$. The quandle operation in this case, takes the form 
\[  a * b = t_{\kappa (b)} \, a + (1  -  t_{\kappa (a)}) \, b    \]
where the component map $\kappa : A(D) \to \{ 1, ... , m  \}$ goes from the set of arcs $A$ of a link diagram $D$ to the index of the link component. Shown in \cite{traldi1}, this operation (along with the inverse) satisfies the quandle axioms in Definition~\ref{qaxioms}, and is a stronger link invariant than the single-variable Alexander quandle. It is associated to the link's module sequence  \cite{crowell1969elementary, hillman1981alexander}  with Alexander module $M_A (L)$  \cite{traldi1}.   

The data that determines Traldi's multivariate quandle operation  depends on the presentation of a given link, with the component map enabling the assignment of elements $t_1, ... , t_m$ to the arcs of the link diagram. Therefore, there is no generic way to directly combine the $t_1, ... , t_m$  with an abstract $\Lam_m$-module to get a canonical multivariate Alexander quandle independent of $L$. Traldi's formulation is associated to the link's module sequence \cite{crowell1969elementary, hillman1981alexander}. Consequently, comparing two quandles is directly tied to comparing two links, not two link-independent algebraic structures. Contrast this to the case of single-variable Alexander quandles, where the algebraic machinery is formulated independently of knot or link structures, and can be applied to compute colorability of a given knot or link \cite{kauffman2018colorings}.  Hence, we need a way to construct a multivariate version of Alexander quandles that is a priori link-independent. Such a construction would be useful for problems related to enumeration of  multivariate Alexander quandles (of any given finite order); as well as, for problems related to link colorability.  This is exactly the problem we address and solve in this paper. 
 
To resolve the aforementioned issue, one has to abstract the definition of  multivariate Alexander quandles in a way that is independent of the link presentation. Doing so, leads to a strict constraint  $t_{(a * b)} = t_a$ on the 
 assignment of the $t_a$'s, for all $a \in M$, elements of the module (see Section~2). It turns out that standard multivariable Alexander modules over the polynomial ring $\Z[t_1, ..., t_m, t_1^{-1}, ..., t_m^{-1}]$ do not, in general, satisfy this constraint. To solve precisely this problem, we introduce a new construction for (finite) multivariate Alexander quandles, which is based on groupoid rings\footnote{Note that some authors also refer to these as groupoid algebras over $k$, particularly when $k$ is a field \cite{khalkhali2013basic}. On the other hand, when $k$ is a unital commutative ring, the "groupoid ring" terminology is common \cite{lundstrom2006separable}. The latter applies to us here.}  and algebroids, rather than group rings.

The groupoid ${\cal G}$ we construct here consists of a disjoint union of delooping groupoids of deck groups. This is a totally disconnected groupoid. The objects of ${\cal G}$ label the $m$ components of a link, and the hom-sets $\Hom(i,i)$ are deck groups with generator $t_i$. Elements of this groupoid are thus "tagged" by a label referring to the vertex group to which they belong. We show that, compared to standard Alexander modules over the group ring $\Z[\Z]$, the multivariate case works much more naturally with modules over a groupoid ring $\Z[{\cal G}]$.  The "oidified" counterpart of  our groupoid ring is identified as a $\Z$-algebroid, $\Zb[{\cal G}]$  (these objects are also referred to as  preadditive categories, ringoids, or $Ab$-enriched categories)   \cite{Mitchell1972, Kelly1982, mitchell1985, avi2026, nlab:ab-enriched_category}.  

We present the module theory for the groupoid ring $\Z[{\cal G}]$, and the corresponding algebroid, $\Zb[{\cal G}]$. Turns out, these are two different presentations of the same module theory. This leads to a groupoid-based formulation of multivariate Alexander modules.  Multivariate Alexander quandles arise naturally from our groupoid framework, and satisfy the three   quandle axioms. We then find three classes of quandle operations following our formulation of multivariate Alexander quandles. One of them is similar to Traldi's operation \cite{traldi1}  (while being independent of any link presentation), whereas, the other two are new multivariate quandle operations. Furthermore, we show that maps between module components, facilitated via quandle multiplication, realize quiver presentations of multivariate Alexander quandles, thus depicting the oidification of Alexander modules, and the compositional structure of multivariate Alexander quandles from univariate ones. Given the computational framework we have developed here, we   explicitly compute new examples of multivariate Alexander quandles, adding to the existing literature classification  \cite{nelson1, nelson2, lopes2006finite, vendramin2012, clark2014quandle, higashitani2024classification}; and also demonstrate a computation of link colorability within our framework. 

This article is organized as follows:  we begin with preliminaries on Fox calculus applied to $m$-component links in Section 2; we then establish the groupoid structure for constructing new classes of Alexander modules in Section 3; then in Section 4, we present definitions of groupoid rings and $\Z$-algebroids, relevant to the definition of multivariate Alexander modules; in Section 5, we define multivariate Alexander quandles and derive new quandle operations; Section 6 introduces quiver presentations of quandles; Section 7 concerns computations, namely, enumeration of new multivariate Alexander quandles, and an application to link colorability; finally in Section 8, we conclude with closing remarks and future directions.

\section{Fox Calculus for $m$-Component Links}

Most of this section will consist of preliminary definitions, notation and results on Fox calculus \cite{fox1}, adapted to $m$-component links (following  \cite{crowellfox, hillman1981alexander, burde2013knots}). We also derive an operation for multivariate Alexander quandles, similar to what appeared in \cite{traldi1}. The informed reader may directly jump to that derivation.   

Following standard convention, $L = K_1 \cup \cdots \cup K_m \subset S^3$ denotes an oriented link of $m$ components, $X_L = S^3 \setminus N(L)$ its
exterior, and $\pi_L = \pi_1(X_L)$ the link group.
The case $m = 1$ recovers knot-theoretic results; all genuine
differences from that case are what we are interested in.

For a knot ($m=1$) one has $H_1(X_K;\Z) \cong \Z$, generated by a
single meridian $\mu$.
For an $m$-component link the situation is richer:
\[
  H_1(X_L;\,\Z) \;\cong\; \Z^m,
\]
freely generated by the meridians $\mu_1, \ldots, \mu_m$ of the $m$
components.
The abelianization homomorphism is
\[
  \phi \colon \pi_L \;\twoheadrightarrow\; \Z^m,
  \qquad
  \phi(\mu_i) = e_i,
\]
where $e_i$ is the $i$-th standard basis vector.
Each Wirtinger generator $x$ belongs to a definite component
$K_{\ell(x)}$, so
\[
  \phi(x) = t_{\ell(x)} \in \Z^m.
\]

Now, the integral group ring of $\Z^m$ is the ring of
\emph{multivariable Laurent polynomials}:
\[
  \Lm \;=\; \Z[\Z^m]
       \;=\; \Z\bigl[t_1^{\pm 1}, \ldots, t_m^{\pm 1}\bigr],
\]
one variable $t_i$ per component.
This replaces the single-variable ring
$\Lone = \Z[t^{\pm 1}]$ of the knot case.

To define the Alexander module, one works with a covering space of the link complement.  Let $\widetilde{X}_L \to X_L$ be the universal abelian cover 
associated to the abelianization map $\phi$, with deck group $\Z^m$, such that each generator $t_i$ acts as a deck transformation corresponding to $\mu_i$.

\begin{definition}[Alexander Module of a Link \cite{alexander1928} (see \cite{burde2013knots} for a pedagogical exposition)]  
The \emph{Alexander module} of $L$ is then defined to be  
\[
  \AL \;=\; H_1\!\bigl(\widetilde{X}_L;\,\Z\bigr)
\]
as a $\Lm$-module.  
\end{definition}

Next, the standard way to formulate Fox calculus makes use of the Wirtinger presentation of the link group. 
Let $D$ be an oriented diagram of $L$ with $n$ crossings.
Label the arcs $a_1, \ldots, a_n$ and assign to each arc $a_i$ a
\emph{component label} $\ell(i) \in \{1, \ldots, m\}$ indicating
which component of $L$ it belongs to.
The Wirtinger generator $x_i$ corresponding to $a_i$ satisfies
$\phi(x_i) = t_{\ell(i)}$.  The \emph{Wirtinger presentation} of the link group  $\pi_L$ is expressed in terms of $n$ generators and $n$ relations (though only $n-m$ of these relations are independent) as follows:   
\[
  \pi_L \;=\;
  \langle\, x_1, \ldots, x_n \;\mid\; r_1, \ldots, r_n \,\rangle
\]
where at each crossing $k$, we will denote the overstrand $x_b$ passes over
understrands $x_a$ (entering) and $x_c$ (exiting), giving
\begin{eqnarray*}
  r_k \colon x_c &=& x_b x_a x_b^{-1}   \qquad  \mbox{for a positive crossing with sign $+1$}       \\
        r_k \colon x_c &=& x_b^{-1} x_a x_b  \qquad  \mbox{for a negative crossing with sign $-1$} 
\end{eqnarray*}
In uniform notation, we write $r_k \equiv x_b^{\varepsilon_k} x_a
x_b^{-\varepsilon_k} x_c^{-1} = 1$ with $\varepsilon_k \in \{+1,-1\}$.

Following \cite{fox1}, the Fox free differential calculus can now be expressed in terms of  $F_n = F(x_1, \ldots, x_n)$,  the free group over the $n$ generators,  and its integral group ring,  $\Z[F_n]$.  
The \emph{Fox derivatives} are  $\Z$-linear maps
\[
  \frac{\partial}{\partial x_i}
  \colon \Z[F_n] \longrightarrow \Z[F_n]
\]
defined by:
\[
  \frac{\partial 1}{\partial x_i} = 0,
  \qquad
  \frac{\partial x_j}{\partial x_i} = \delta_{ij},
  \qquad
  \frac{\partial x_j^{-1}}{\partial x_i} = -\delta_{ij}\,x_j^{-1},
  \qquad
  \frac{\partial(uv)}{\partial x_i}
    = \frac{\partial u}{\partial x_i} + u\,\frac{\partial v}{\partial x_i}.
\]
They satisfy the "fundamental formula" in \cite{fox1}: 
\[
  u - 1 \;=\; \sum_{i=1}^n \frac{\partial u}{\partial x_i}(x_i - 1)
  \qquad \text{for all } u \in \Z[F_n].
\]
The abelianization $\phi \colon \pi_L \to \Z^m$ extends to a ring
homomorphism $\phi \colon \Z[\pi_L] \to \Lm$ by
$\phi(x_i) = t_{\ell(i)}$. In contrast to the univariate case, here we assign a distinct $t_{\ell(i)}$ for each generator $x_i$.

Taking the Fox derivatives of the relations at positive crossings, and then  applying $\phi$,  we obtain:
\[
  \phi\!\left(\frac{\partial r_k}{\partial x_a}\right) = t_{\ell(b)},
  \qquad
  \phi\!\left(\frac{\partial r_k}{\partial x_b}\right)
    = 1 - t_{\ell(a)},
  \qquad
  \phi\!\left(\frac{\partial r_k}{\partial x_c}\right)
    = -1
\]

Likewise, at negative crossings we get:
\[
  \phi\!\left(\frac{\partial r_k}{\partial x_a}\right) = t_{\ell(b)}^{-1},
  \qquad
  \phi\!\left(\frac{\partial r_k}{\partial x_b}\right)
    = t_{\ell(b)}^{-1} \left(t_{\ell(a)} - 1\right),
  \qquad
  \phi\!\left(\frac{\partial r_k}{\partial x_c}\right)
    = -1.
\]
where we additionally have the condition
\[  t_{\ell(c)} = t_{\ell(a)}  \]
which follows from $r_k = 1$. 

In the above notation, the $n \times n$ matrix over $\Lm$ 
\[
  \mathcal{M}(t_1, \ldots, t_m)
  \;=\;
  \left(\phi\!\left(\frac{\partial r_i}{\partial x_j}
        \right)\right)_{1 \leq i,j \leq n}.
\]
is called the Alexander matrix of $L$.  The Alexander module is then presented by $\mathcal{M}(t_1,\ldots,t_m)$.  We will use this to read the crossing relations in $A(L)$.  Let $\bar{x}_1, \ldots, \bar{x}_n \in \AL$ denote the images of the  Wirtinger generators after abelianization and lifting to the
universal abelian cover.  
At a positive crossing with overstrand $x_b \in K_{\ell(b)}$ and
understrands $x_a, x_c \in K_{\ell(a)}$, the Fox calculus gives in
$\AL$:
\[   t_{\ell(b)}\,\bar{x}_a  +  (1 - t_{\ell(a)})\,\bar{x}_b  - \bar{x}_c  \;=\; 0 
\]
Solving for the outgoing arc $\bar{x}_c$:
\[
  \bar{x}_c 
  \;=\;
  t_{\ell(b)}\,\bar{x}_a + (1 - t_{\ell(a)})\,\bar{x}_b  \]
This defines a multivariate Alexander quandle operation for a link (later, we shall see that there also exists two other multivariate quandle operations). We then have:
\begin{definition} 
A \emph{multivariate Alexander quandle} is a $\Lm$-module $Q$
equipped with the following binary operation (using the standard notation $a \equiv \bar{x}_a$), applied to a positive crossing, where `a' refers to the understrand:   
\begin{eqnarray}
  a \;*\; b \;=\; t_b\,a + (1 - t_a)\,b  
\label{foxmqdef1}
\end{eqnarray}  
with the constraint  $t_{(a \,*\, b)} = t_a$.

The inverse operation, applied to negative crossings with `a' once again being the understrand, is given by: 
\begin{eqnarray}
   a \;\bar{*}\; b \;=\; t_b^{-1}\,a + t_b^{-1} \, (t_a - 1)\,b   
\label{foxmqdef2}
\end{eqnarray}  
with the corresponding constraint being  $t_{(a \,\bar{*}\, b)} = t_a$.
\end{definition}
It is easy to check that the above operation satisfies the three quandle axioms. 
Idempotence and right-invertibility are straightforward. Let us now check for self-distributivity: 
\begin{eqnarray*}  
\hspace{-5cm}  LHS &=& (a \,*\, b) \,*\, c   \\
 &=&  t_c \, ( t_b\,a + (1 - t_a)\,b  )  + (1 - t_a) \, c
\end{eqnarray*} 
\begin{eqnarray*}    
\hspace{1.8cm}   RHS &=& (a \,*\, c) \,*\,  (b \,*\, c)  \\
 &=&  t_b \, ( t_c\,a + (1 - t_a)\,c  )  + (1 - t_a) \, ( t_c\,b + (1 - t_b)\,c  ) 
\end{eqnarray*} 
where the $LHS$ matches the $RHS$. 

The operations appearing in eqs.~(\ref{foxmqdef1}) and (\ref{foxmqdef2}) are  analogous to what was reported in \cite{traldi1}  (Traldi's operation was tied to a  $\Lam_m$-linear epimorphism $\phi_L$, originating from the link's module sequence).  The constraints  $t_{(a * b)} = t_a$ and $t_{(a \,\bar{*}\, b)} = t_a$   (corresponding to the inverse operation)  serve as conditions on right ideals of the quandle algebra (see Proposition~\ref{prop5.2v0}), and thus, restrict the assignment of the multiple $t$-variables to specific module elements. In general, any multivariable Alexander module over  $\Z[t_1, ..., t_m, t_1^{-1}, ..., t_m^{-1}]$ will not be compatible with these constraints (following   Proposition~\ref{prop5.2v0}).  This calls for a new construction of multivariate Alexander modules which not only are compatible with these conditions, but also fully specify the assignment of $t$-variables to module elements. Below, we go on to show that our framework based on groupoid rings does just that.  

Along the way, we also find two other multivariate quandle operations. Unlike earlier approaches, our groupoid ring framework distinguishes the algebraic machinery from explicit link dependence in a natural way, thus paving the way for using multivariate Alexander quandles as computational gadgets in their own right.

\section{A Groupoid for Deck Groups }

As noted above (Section~2),  univariate (finite) Alexander modules, as $\Lm$-modules, are polynomial rings over cyclic groups, with a designated invertible element, which acts as the generator of deck transformations associated to the covering space of the knot complement. Given that, the question we wish to answer is: what suitable generalizations of these structures can one define such that they carry multivariate data, and are compatible with quandle axioms? Towards this goal, let us begin by examining various products of cyclic groups and their associated rings. 

We start by distinguishing between two main types of compositions on families of cyclic groups that yield fundamentally different algebraic structures. 
On one hand, compositions between finite abelian groups commonly involve  their direct sums, which preserves the group structure.  Given positive integers $n_1, \ldots, n_m$, the classical direct sum of cyclic groups is
\[
  \bigoplus_{i=1}^{m} \Z_{n_i}  
\]
with group operation defined coordinate-wise. 

In this work, we consider
an alternative construction, involving a disjoint union, where the elements are instead  ``tagged'' by indices, that is,  $(i, \Z_{n_i})$ denotes a copy of $\Z_{n_i}$ with index $i$:    
\[
  (1,\Z_{n_1}),\; (2,\Z_{n_2}),\; \ldots,\; (m,\Z_{n_m})  
\]
and examine its algebraic properties.

Let us demonstrate this construction with a simple example. 
\begin{example}
Recall that the direct sum $\Z_2 \oplus \Z_3$ consists of ordered pairs
with component-wise addition:
\[
  \Z_2 \oplus \Z_3 = \{(a,b) : a \in \Z_2,\; b \in \Z_3\}
\]
with elements
\[
  \{(0,0),\; (0,1),\; (0,2),\; (1,0),\; (1,1),\; (1,2)\}
\]
and group operation
$(a,b) + (a',b') = (a + a' \bmod 2,\; b + b' \bmod 3)$.
The total order is $2 \cdot 3 = 6$. 

In contrast, consider the disjoint union of $\Z_2$ and $\Z_3$ as sets,
where each element is tagged by an index indicating its component of
origin:    
\[
  (1,\Z_2) \bigsqcup \, (2,\Z_3) = \{(1,0),\;(1,1),\;(2,0),\;(2,1),\;(2,2)\} 
\]
where $(i, a)$ denotes a "tagged" element of the set, with $i$, an index labelling an algebra, and $a$, an element of the algebra labelled by $i$.  The disjoint union of $\Z_2$ and $\Z_3$ above has total cardinality $2 + 3 = 5$.

\end{example}

More generally, the direct sum of $m$ cyclic groups, written as     
$\bigoplus_{i=1}^{m} \Z_{n_i}$, has order $\prod_{i=1}^{m} n_i$.  Whereas, the disjoint union, denoted by  $\bigsqcup_{i=1}^{m} \Z_{n_i}$, has cardinality $\sum_{i=1}^{m} n_i$.  The direct sum of groups carries a canonical group structure inherited from the components. Whereas,  the disjoint union of groups  does not inherit a group structure. Unlike a direct sum, whose elements are tuples $(a_1,\ldots,a_m)$, the disjoint union has tagged pairs $(i,a)$.  
 
Let us write a definition for the tagged-element construction on cyclic groups:   
\begin{definition}
The \emph{disjoint union} (or tagged-element construction) on cyclic groups
$\Z_{n_1}, \ldots, \Z_{n_m}$ is the set
\[
  {\cal X} = \bigsqcup_{i=1}^{m} \Z_{n_i} = \{(i,a) : 1 \le i \le m,\; a \in \Z_{n_i}\}
\]
equipped with the partial binary operation $+_{\cal X}$ defined by
\[
  (i,a) +_{\cal X} (i,b) = (i,\; a + b \bmod n_i)
\]
for elements in the same component, and undefined for elements $(i,a)$
and $(j,b)$ with $i \neq j$.
\end{definition}
The partial operation $+_{\cal X}$ is precisely what obstructs a global group structure, and instead makes ${\cal X}$ into a groupoid. For what follows here, it is this groupoid structure, that will support a module theory accommodating  multivariate Alexander quandles.

The groupoid associated to the above disjoint union of tagged elements has a natural category-theoretic definition. In a category, some compositions are defined, whereas others are not, just as in our discussion for the groupoid structure above.  Groupoids were originally introduced by H. Brandt in \cite{brandt1927verallgemeinerung};  modern treatments can be found in \cite{brown1987groups, weinstein1996groupoids, brown2006topology, nlab:groupoid}.

\begin{definition}
\label{defgcygr}
Let $n_1, \ldots, n_m$ be positive integers. We define the \emph{groupoid}  $\calG(n_1, \ldots, n_m)$ as a category, whose objects are:  
\[ \Ob(\calG) = \{1, 2, \ldots, m\}  \]
with morphisms:  
 \[
          \Hom(i,j) =
          \begin{cases}
            \Z_{n_i} & \text{if } i = j, \\
            \varnothing & \text{if } i \neq j.
          \end{cases}
        \]
  For $a, b \in \Hom(i,i) = \Z_{n_i}$,  compositions are defined by
        \[
          b \circ a = a + b \pmod{n_i}.
        \]
 subject to associativity   
        \[
          (c \circ b) \circ a = c \circ (b \circ a) \pmod{n_i},
        \]
     for any $a, b, c \in \Z_{n_i}$.  
     
Furthermore, for each object $i$, there exists an identity morphism  $\mathrm{id}_i = 0 \in \Z_{n_i}$, such that  for any $a \in \Z_{n_i}$,
        \[
          a \circ \mathrm{id}_i =  a =  \mathrm{id}_i \circ a 
        \]

And,  for each $a \in \Hom(i,i) = \Z_{n_i}$, the inverse morphism is defined as 
        $a^{-1} = -a \pmod{n_i}$,  satisfying 
        \[
          a \circ a^{-1} = \mathrm{id}_i = a^{-1} \circ a
        \]    
\end{definition}

Note that the above groupoid $\calG(n_1, \ldots, n_m)$ is totally disconnected since
$\Hom(i,j) = \varnothing$ for $i \neq j$. The vertex group at object
$i$ is precisely $\Z_{n_i}$.

\begin{remark}  
The groupoid $\calG(n_1, \ldots, n_m)$ is a disjoint union of delooping   groupoids of cyclic groups (see \cite{nlab:groupoid}). More precisely, as an isomorphism of groupoids, we have:   
\[
  \calG(n_1, \ldots, n_m) \;\cong\; \bigsqcup_{i=1}^{m} B\Z_{n_i}  
\]
where $B\Z_{n_i}$ is the delooping groupoid of $\Z_{n_i}$ and
$\bigsqcup$ denotes the disjoint union (or the coproduct in the category of
groupoids).  Thus, $\bigsqcup_i B\Z_{n_i}$ has object  set $\{1, \ldots, m\}$ (one object from each $B\Z_{n_i}$, which we
label by $i$), and
\[
  \Hom_{\bigsqcup_i B\Z_{n_i}}(i,j) =
  \begin{cases}
    \Z_{n_i} & \text{if } i = j, \\
    \varnothing & \text{if } i \neq j,
  \end{cases}
\]
which coincides with the morphism sets of $\calG(n_1, \ldots, n_m)$.  
\end{remark}

Note that, the algebraic and categorical descriptions of the disjoint union construction easily translate into each other.  
For ${\cal X} = \bigsqcup_{i=1}^{m} \Z_{n_i}$ as the set of tagged elements of the  disjoint union, there is a canonical bijection
\[
  {\cal X} \longleftrightarrow \Mor(\calG) \; :=  \bigsqcup_{i,j \in \Ob(\calG)} \Hom(i,j)
\]
between ${\cal X}$ and the set of all morphisms in $\calG$. Given that  in our case $\calG$ is totally disconnected, the above bijection is realized as 
\[
{\cal X} =  \bigsqcup_{i=1}^{m} \Z_{n_i}  = \bigsqcup_{i=1}^{m} \Hom(i,i) = \Mor(\calG)  
\]
The bijection sends $(i,a) \in {\cal X}$ to the morphism $a \in \Hom(i,i)$.  Under this bijection, the partial operation $+_{\cal X}$ on tagged elements  corresponds precisely to composition of morphisms in 
$\calG$. The operation $(i,a) +_{\cal X} (i,b) = (i, a+b)$ corresponds to
$b \circ a = a + b$ in $\Hom(i,i)$, and the undefined status of
$(i,a) +_{\cal X} (j,b)$ for $i \neq j$ corresponds to the
non-composability of morphisms with mismatched source and target.  This  justifies the groupoid $\calG$ as the natural
algebraic structure on the disjoint union  construction, and it 
precisely captures the partial composability that distinguishes
disjoint unions from direct sums.

\begin{remark}
The groupoid $\calG(n_1, \ldots, n_m)$  we have constructed here, can be interpreted as a means to simultaneously encode $m$ independent deck groups such that the generator $t_i$ ($1 \leq i \leq m$) of the $i^{th}$ deck group is associated to the $i^{th}$ link component.   
\end{remark}

\section{Groupoid Rings, $\Z$-Algebroids \& Alexander Modules}

Having established the groupoid structure of $\calG = \calG(n_1, \ldots, n_m)$, constructed from the disjoint union of cyclic groups, we now present the corresponding ring / algebroid and module theory for this groupoid. What we need is an algebraic entity that generalizes classical group rings $\Z[G]$ and their  modules to the  groupoid setting - namely, groupoid rings. The latter are also referred to as groupoid algebras or groupoid convolution algebras  (see \cite{renault2006groupoid, weinstein1996groupoids, khalkhali2013basic, nlab:category_algebra}). Groupoid rings are rings with local units, whose idempotent elements implicitly encode the objects of the groupoid \cite{Wisbauer1991}. Nonetheless, these are rings, and not categories. 

In fact, the horizontal categorification or oidification of rings lead to   manifestly categorical structures - namely, algebroids (also referred to as ringoids, when the category is small). These  were originally introduced by B. Mitchell as "rings with several objects" \cite{Mitchell1972, mitchell1985},  and are also realized as enriched categories  \cite{Kelly1982, nlab:ab-enriched_category}. In what follows here, we shall discuss both, groupoid rings and algebroids, with respect to $\calG(n_1, \ldots, n_m)$. These definitions will provide us with the precise language for expressing multivariate Alexander modules and quandles.

\subsection{Groupoid Rings \& their Oidification}

Let us begin with the definition of the integral groupoid ring, which
generalizes the notion of an integral group ring.  More generally, groupoid rings are defined over a unital commutative ring $k$  \cite{weinstein1996groupoids,   dokuchaev2, lundstrom2006separable, machado2025non,   nlab:category_algebra}.  
When $k = \Z$, we have:   
\begin{definition}  
Let $\calG$ be a groupoid. The \emph{integral groupoid ring}
$\Z[\calG]$ is defined as follows:

As an abelian group 
\[
  \Z[\calG] = \bigoplus_{g \in \Mor(\calG)} \Z \cdot g,
\]
the free abelian group on the set of all morphisms of $\calG$. Elements
are formal finite $\Z$-linear combinations $\sum_g a_g \cdot g$ with
$a_g \in \Z$.

Multiplication is defined on basis elements by  
\[
  g \cdot h =
  \begin{cases}
    g \circ h & \text{if } \operatorname{sr}(g) = \operatorname{tr}(h), \\
    0         & \text{otherwise,}
  \end{cases}
\]
where $\operatorname{sr}, \operatorname{tr} : \Mor(\calG) \to \Ob(\calG)$ denote the source and target
maps, and extended to all of $\Z[\calG]$ by $\Z$-bilinearity.
\end{definition}

\begin{remark}
From the above definition, one can check that the multiplication on $\Z[\calG]$ is indeed associative. 
\end{remark}

In general, groupoid rings lack a global multiplicative identity (when the underlying object set is not finite).  Instead, one has a set of idempotent elements that serve as local units (see \cite{Wisbauer1991} for a thorough treatment of these rings).  For  $\Z[\calG]$, with $|\Ob(\calG)| > 1$, the local units are defined as follows: 

\begin{definition}[ (see \cite{Wisbauer1991}) ]  
\label{ludef}
The \emph{local units} of $\Z[\calG]$ are defined as:       
For each object $x \in \Ob(\calG)$, the identity morphism
$e_x := \mathrm{id}_x$ satisfies:
\begin{enumerate}  
  \item $e_x^2 = e_x$ (idempotence).
  \item $e_x \cdot g = g$ for all $g$ with $\operatorname{tr}(g) = x$.
  \item $g \cdot e_x = g$ for all $g$ with $\operatorname{sr}(g) = x$.
  \item $e_x \cdot e_y = 0$ for $x \neq y$ (orthogonality).
\end{enumerate}
The set $\{e_x : x \in \Ob(\calG)\}$ forms a complete system of
orthogonal local idempotents. Moreover, when $\Ob(\calG)$ is finite, the sum
$\sum_{x \in \Ob(\calG)} e_x$ acts as a two-sided identity on
$\Z[\calG]$.   
\end{definition}

We now apply the above definitions to our groupoid of interest, $\calG = \calG(n_1, \ldots, n_m)$ from Definition~\ref{defgcygr}.  

\begin{proposition}   
\label{prop-tdg1} 
For the totally disconnected groupoid $\calG = \calG(n_1, \ldots, n_m)$, defined in Definition~\ref{defgcygr},  the integral groupoid ring takes the form
\[
  \Z[\calG] = \bigoplus_{i=1}^{m} \Z[\Z_{n_i}]  
\]
as an abelian group, where $\Z[\Z_{n_i}]$ denotes the integral group
ring of $\Z_{n_i}$. The multiplication is given by: for
$\alpha \in \Z[\Z_{n_i}]$ and $\beta \in \Z[\Z_{n_j}]$,
\[
  \alpha \cdot \beta =
  \begin{cases}
    \alpha\beta & \text{if } i = j, \\
    0           & \text{if } i \neq j,
  \end{cases}
\]
where $\alpha\beta$ denotes multiplication in $\Z[\Z_{n_i}]$.
\end{proposition}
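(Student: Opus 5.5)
The plan is to unwind the definition of $\Z[\calG]$ directly, using that $\calG = \calG(n_1,\ldots,n_m)$ from Definition~\ref{defgcygr} is totally disconnected. First, at the level of abelian groups, I will partition the basis. Since $\Hom(i,j)=\varnothing$ for $i\neq j$, the morphism set is
\[
\Mor(\calG)=\bigsqcup_{i=1}^m \Hom(i,i)=\bigsqcup_{i=1}^m \Z_{n_i},
\]
as already observed in the canonical bijection ${\cal X}\leftrightarrow\Mor(\calG)$. A free abelian group on a disjoint union of sets is the direct sum of the free abelian groups on the pieces. Hence
\[
\Z[\calG]=\bigoplus_{g\in\Mor(\calG)}\Z\cdot g=\bigoplus_{i=1}^m\Bigl(\bigoplus_{g\in\Z_{n_i}}\Z\cdot g\Bigr).
\]
The $i$-th summand is, by definition, the underlying abelian group of the group ring $\Z[\Z_{n_i}]$. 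This establishes the additive decomposition.

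Next, I will compute the product on basis elements and then extend bilinearly. Take $g\in\Hom(i,i)$ and $h\in\Hom(j,j)$, so that $\operatorname{sr}(g)=\operatorname{tr}(g)=i$ and $\operatorname{sr}(h)=\operatorname{tr}(h)=j$. There are two cases.
\begin{itemize}
\item If $i\neq j$, then $\operatorname{sr}(g)\neq\operatorname{tr}(h)$, and the groupoid ring multiplication gives $g\cdot h=0$.
\item If $i=j$, then $g\cdot h=g\circ h$. By Definition~\ref{defgcygr}, this is the group operation $g+h \pmod{n_i}$ in $\Z_{n_i}$, which is exactly the product of the basis elements $g,h$ in $\Z[\Z_{n_i}]$.
\end{itemize}
Now let $\alpha=\sum_{g\in\Z_{n_i}}a_g g$ and $\beta=\sum_{h\in\Z_{n_j}}b_h h$. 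By $\Z$-bilinearity, $\alpha\cdot\beta=\sum_{g,h}a_gb_h\,(g\cdot h)$. When $i\neq j$, every term vanishes, so $\alpha\cdot\beta=0$. When $i=j$, the sum is term-by-term the group-ring product $\alpha\beta$ in $\Z[\Z_{n_i}]$.

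To finish, I will note that a general element of $\Z[\calG]$ is a finite sum of components $\alpha_i\in\Z[\Z_{n_i}]$, so bilinearity determines the whole multiplication from the two cases above. The result is componentwise multiplication, i.e. $\Z[\calG]\cong\prod_i\Z[\Z_{n_i}]$ as rings, since the sum is finite.

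There is no real obstacle here. The only point needing care is notational: the composition $b\circ a=a+b$ in Definition~\ref{defgcygr} must be matched with the multiplicative convention in $\Z[\Z_{n_i}]$ (writing $t_i^a$ for $a$). Because $\Z_{n_i}$ is abelian, the order of composition is irrelevant, so the identification is consistent.
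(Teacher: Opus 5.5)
Your proposal is correct and follows essentially the same route as the paper. You split $\Mor(\calG)$ into the disjoint vertex groups to get the additive decomposition, then check on basis elements that $\operatorname{sr}(g)=\operatorname{tr}(h)$ iff $i=j$ and extend bilinearly. Your explicit bilinear expansion, and your remark that commutativity of $\Z_{n_i}$ makes the composition-order convention harmless, are small clarifications the paper leaves implicit.
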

\begin{proof}
Since $\calG$ is totally disconnected, we have
$\Mor(\calG) = \bigsqcup_{i=1}^{m} \Hom(i,i) = \bigsqcup_{i=1}^{m} \Z_{n_i}$.
The free abelian group on this set decomposes as
$\bigoplus_i \Z[\Z_{n_i}]$.

For morphisms $g \in \Hom(i,i)$ and $h \in \Hom(j,j)$, we have
$\operatorname{sr}(g) = \operatorname{tr}(g) = i$ and $\operatorname{sr}(h) = \operatorname{tr}(h) = j$. The condition $\operatorname{sr}(g) = \operatorname{tr}(h)$
holds if and only if $i = j$. When $i = j$, the product
$g \cdot h = g \circ h$ is the group operation in $\Z_{n_i}$, extended
linearly to give multiplication in $\Z[\Z_{n_i}]$.
\end{proof}

\begin{remark}
From Proposition~\ref{prop-tdg1}, it follows that for finite $m$, we have an isomorphism of rings
\[
  \Z[\calG] \;\cong\; \prod_{i=1}^{m} \Z[\Z_{n_i}],
\]
where multiplication is component-wise. 
\end{remark}

As remarked earlier,  $\Z[\calG]$ is a ring, not a category. Nonetheless, notice that the set $\{ e_i \}$ of local units implicitly encodes the objects of $\calG$ within the ring. In fact, one can go further and express the data of this groupoid ring in manifestly categorical terms, which will further illuminate its structure.  
To achieve this, let us first define a $\Z$-Algebroid, following \cite{mitchell1985, Kelly1982, mitchener2014, avi2026, nlab:ab-enriched_category}:
\begin{definition}  
A \emph{$\Z$-algebroid} (also called an \emph{Ab-enriched category} or
a \emph{ringoid}) is a category $\mathcal{A}$ such that:
\begin{enumerate}  
  \item Each hom-set $\Hom_\mathcal{A}(x, y)$ carries the structure of an
        abelian group.
  \item Composition
        $\Hom_\mathcal{A}(y,z) \times \Hom_\mathcal{A}(x,y) \to \Hom_\mathcal{A}(x,z)$
        is $\Z$-bilinear.  
\end{enumerate}
Furthermore, the composition map satisfies the associativity and unit axioms of a category. 
\end{definition}

Next, we have \cite{Kelly1982, mitchell1985, mitchener2014}:    
\begin{definition}      
A \emph{functor of $\Z$-algebroids} (or \emph{Ab-enriched functor})    
$F : \mathcal{A} \to \mathcal{B}$ is a functor such that each map
$\Hom_F (x,y) : \Hom_\mathcal{A}(x,y) \to \Hom_\mathcal{B}(Fx, Fy)$ is a group  homomorphism, and is compatible with compositions and units in $\mathcal{A}$ and $\mathcal{B}$.   
\end{definition}   

The following definition then relates back to the integral groupoid ring that we have discussed above \cite{mitchell1985, mitchener2014, avi2026, nlab:ab-enriched_category}:    
\begin{definition}  
Given a groupoid $\calG$, its \emph{free $\Z$-linearization} is the
$\Z$-algebroid $\Zb[\calG]$ defined by:
\begin{enumerate}  
  \item $\Ob(\Zb[\calG]) = \Ob(\calG)$.
  \item $\Hom_{\Zb[\calG]}(x,y) = \Z[\Hom_{\calG}(x,y)]$, the free abelian
        group on $\Hom_{\calG}(x,y)$.
  \item Composition given by $\Z$-bilinear extension of groupoid
        composition.
\end{enumerate}
\end{definition}

The groupoid $\calG$ and its linearization $\Zb[\calG]$ share the same objects and underlying morphisms. However, they are distinct categorical structures: $\calG$ is an ordinary category (in fact, a groupoid as a category) with 
hom-sets, while $\Zb[\calG]$ is a $\Z$-algebroid with hom-groups. 
Nonetheless, there is a way to translate between the data that the two structures carry via a canonical inclusion functor 
$$\iota : \calG \to \Zb[\calG]$$  
sending each morphism $g$ to the basis element $1 \cdot g$. This enables the following isomorphism, which will be useful later, when we discuss the module theory:    
\begin{proposition}
\label{prop4.1v1}  
For any $\Z$-algebroid $\mathcal{A}$, the restriction along $\iota$ induces
a natural isomorphism of categories
\[
  \AbFun(\Zb[\calG],\, \mathcal{A}) \;\cong\; \Fun(\calG,\, \mathcal{A}),
\]
where $\AbFun$ denotes the category of Ab-enriched functors. 
\end{proposition}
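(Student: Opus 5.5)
The plan is to prove the isomorphism by writing down two mutually inverse functors and checking naturality. In one direction, restriction along $\iota$ sends an Ab-enriched functor $F:\Zb[\calG]\to\mathcal{A}$ to $F\circ\iota:\calG\to\mathcal{A}$, an ordinary functor into the underlying category of $\mathcal{A}$. It sends an enriched natural transformation $\eta:F\Rightarrow F'$ to the whiskered transformation $\eta\iota$, whose components are $\eta_x$. Since objects of $\Zb[\calG]$ and $\calG$ coincide, this is clearly functorial. For Ab-enriched functors, an enriched natural transformation is just an ordinary one, so no extra data appears.

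In the other direction, given an ordinary functor $G:\calG\to\mathcal{A}$, I will define $\widetilde{G}:\Zb[\calG]\to\mathcal{A}$ by setting $\widetilde{G}(x)=G(x)$ on objects. On hom-groups I will use the universal property of the free abelian group: the set map $\Hom_\calG(x,y)\to\Hom_\mathcal{A}(Gx,Gy)$ extends uniquely to a group homomorphism
\[
\widetilde{G}\Bigl(\sum_g a_g\, g\Bigr)=\sum_g a_g\, G(g),
\]
using the abelian group structure on $\Hom_\mathcal{A}(Gx,Gy)$. Three checks are needed.
\begin{enumerate}
\item Units are preserved, since $\mathrm{id}_x=\iota(\mathrm{id}_x)$.
\item Composition is preserved. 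Composition in $\Zb[\calG]$ is the bilinear extension of groupoid composition, and composition in $\mathcal{A}$ is bilinear, so both sides of $\widetilde{G}(\beta\circ\alpha)=\widetilde{G}(\beta)\circ\widetilde{G}(\alpha)$ are bilinear in $(\alpha,\beta)$. They agree on basis pairs $(g,h)$ because $G$ is a functor.
\item Natural transformations carry over. A transformation $\theta:G\Rightarrow G'$ with components $\theta_x$ is automatically natural for $\widetilde{G},\widetilde{G'}$. The naturality square $\theta_y\circ\widetilde{G}(\alpha)=\widetilde{G'}(\alpha)\circ\theta_x$ is $\Z$-linear in $\alpha$, because pre- and post-composition in $\mathcal{A}$ are additive, and it holds on the generators $g$.
\end{enumerate}

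Next I will show the two constructions are inverse. The identity $\widetilde{G}\circ\iota=G$ holds by construction. For the other composite, $\widetilde{F\circ\iota}=F$ follows from the uniqueness clause of the free abelian group property: both are additive on each hom-group and agree on the basis $\Hom_\calG(x,y)$. On transformations both constructions leave the components $\eta_x$ unchanged, so they are mutually inverse there as well. This gives an isomorphism of categories, which is strictly stronger than an equivalence. Naturality in $\mathcal{A}$ is immediate: postcomposing with an Ab-enriched functor $H:\mathcal{A}\to\mathcal{B}$ commutes with restriction along $\iota$.

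The main obstacle is the compatibility of $\widetilde{G}$ with composition, specifically in the case of empty hom-sets. Here $\Hom_\calG(x,y)=\varnothing$ gives $\Hom_{\Zb[\calG]}(x,y)=0$, and this must map to the zero morphism. The additivity of $\widetilde{G}$ forces exactly that, and bilinearity handles compositions through zero groups. Once the bilinearity reduction to basis elements is stated carefully, every remaining verification is formal.
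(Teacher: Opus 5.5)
Your proposal is correct and follows the same route as the paper: restriction along $\iota$ in one direction, and $\Z$-linear extension $\sum_g a_g\, g \mapsto \sum_g a_g\, G(g)$ in the other, shown to be mutually inverse. Your version also checks what the paper only asserts: compatibility with composition via bilinearity on basis pairs, uniqueness of the extension from the free abelian group property, and the action on natural transformations, which the paper's proof does not address at all.
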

\begin{proof}
Given an ordinary functor $F : \calG \to \mathcal{A}$, define
$\tilde{F} : \Zb[\calG] \to \mathcal{A}$ on objects by
$\tilde{F}(x) = F(x)$ and on morphisms by $\Z$-linear extension:
$\tilde{F}\!\left(\sum_i a_i g_i\right) = \sum_i a_i F(g_i)$. This is
well-defined and Ab-enriched. Conversely, any Ab-enriched functor
$\Zb[\calG] \to \mathcal{A}$ restricts to an ordinary functor
$\calG \to \mathcal{A}$. These constructions are mutually inverse.
\end{proof}

\subsection{Modules over Groupoid Rings \& $\Z$-Algebroids}

We now discuss the module theory for $\Z[\calG]$, as well as $\Zb[\calG]$.  

Let us begin with $\Z[\calG]$. A left $\Z[\calG]$-module extends the usual definition of a left ring module, taking into account the existence of local units  (\cite{Wisbauer1991, AndersonFuller1992, nastasescu2004methods}): 
\begin{definition}  
\label{def:alg-module}
A \emph{left $\Z[\calG]$-module} is an abelian group $M$ together with
a $\Z$-bilinear action $\Z[\calG] \times M \to M$, written
$(\alpha, \m) \mapsto \alpha \cdot \m$, satisfying:
\begin{enumerate}  
  \item  The action 
        $(\alpha\beta) \cdot \m = \alpha \cdot (\beta \cdot \m)$ is associative 
        for all $\alpha, \beta \in \Z[\calG]$, $\m \in M$.
  \item Local unitality decomposes $M$ as a direct sum
        \[
          M = \bigoplus_{x \in \Ob(\calG)} M_x,
          \quad \text{where } M_x := e_x \cdot M
          = \{\m \in M : e_x \cdot \m = \m\} 
        \]  
        where the set $\{ e_x \}$ are idempotent and orthogonal local units of $\Z[\calG]$ (from Definition \ref{ludef}).  
  \item Each morphism $g : x \to y$ in
        $\calG$ defines a graded left action, which realizes a group
        homomorphism $g_* : M_x \to M_y$.
  \item  Since $\calG$ is a groupoid, each
        $g_*$ becomes an isomorphism with an inverse  
        $(g^{-1})_* : M_y \to M_x$.
\end{enumerate}
Furthermore, a morphism  $f : M \to N$ of left $\Z[\calG]$-modules is a group
homomorphism satisfying $f(\alpha \cdot \m) = \alpha \cdot f(\m)$ for all
$\alpha \in \Z[\calG]$, $\m \in M$.
\end{definition}

The structure of $\Z[\calG]$ leads to a canonical decomposition of modules via idempotents.  To see how this works, let us first express the idempotents of $\Z[\calG]$ explicitly.   
As per Definition~\ref{ludef},  for each $1 \le i \le m$,  the \emph{$i$-th idempotent}   $e_i \in \Z[\calG]$ is the identity morphism
$\mathrm{id}_i \in \Hom(i,i) = \Z_{n_i}$, that is, the element
$0 \in \Z_{n_i}$ viewed as a morphism. Writing $\Z[\calG]$ as  
$\Z[\calG] \cong \prod_{i=1}^{m} \Z[\Z_{n_i}]$, the idempotent elements can be expressed in the following notation 
\[
  e_i = (0, \ldots, 0,\, \mathbf{1}_i,\, 0, \ldots, 0),
\]
where $\mathbf{1}_i \in \Z[\Z_{n_i}]$ is the identity element
(corresponding to $0 \in \Z_{n_i}$) in position $i$.

\begin{remark}
Notice that the idempotents of $\Z[\calG]$, described above, also satisfy completeness:
  $$\sum_{i=1}^{m} e_i = 1_{\Z[\calG]}$$  
which directly follows from the from component-wise multiplication in
$\prod_i \Z[\Z_{n_i}]$.
\end{remark}

Using the above, we can now show the module decomposition of $\Z[\calG]$, given $\calG = \calG(n_1, \ldots, n_m)$, as follows (\cite{nastasescu2004methods}): 
  
\begin{proposition}    
\label{prop-id}  
For  $\calG = \calG(n_1, \ldots, n_m)$ (Definition~\ref{defgcygr}), and  $M$  a $\Z[\calG]$-module, with   $M_i := e_i \cdot M$ for each  $i$, we have that:  
\begin{enumerate} 
  \item Each $M_i$ realizes a $\Z[\Z_{n_i}]$-module.
  \item The idempotents $\{ e_i \}$ uniquely specify the module decomposition  
        \[
          M \cong \bigoplus_{i=1}^{m} M_i.
        \]         
\end{enumerate}
\end{proposition}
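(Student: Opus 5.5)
The argument uses only the ring structure from Proposition~\ref{prop-tdg1} and the idempotent facts in Definition~\ref{ludef} and the subsequent remark: $e_i^2=e_i$, $e_ie_j=0$ for $i\neq j$, $\sum_i e_i = 1_{\Z[\calG]}$, and $e_i$ is the unit of the $i$-th factor $\Z[\Z_{n_i}]$. I would first settle whether $1_{\Z[\calG]}$ acts as the identity on $M$. Definition~\ref{def:alg-module} does not state this explicitly, but item~2 there (local unitality, $M=\bigoplus_x e_xM$) gives it: every $\m\in M$ is a sum $\sum_x \m_x$ with $e_x\m_x=\m_x$. Hence $(\sum_i e_i)\m = \m$, using orthogonality to kill cross terms.

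For part~1, I take $\alpha\in\Z[\Z_{n_i}]\subset \Z[\calG]$ and $\m\in M_i$, and show $\alpha\cdot\m\in M_i$. This holds because $e_i\alpha = \alpha$ in $\Z[\calG]$, since $e_i$ is the unit of the $i$-th factor. Associativity of the action then gives $e_i\cdot(\alpha\cdot\m) = (e_i\alpha)\cdot\m = \alpha\cdot\m$. Bilinearity and associativity of the restricted action are inherited from $M$. Unitality holds because $\mathbf{1}_i = e_i$ acts as the identity on $M_i$ by the very definition of $M_i=\{\m : e_i\m=\m\}$; I would also note that this set coincides with $e_i M$ because $e_i$ is idempotent. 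So $M_i$ is a unital $\Z[\Z_{n_i}]$-module.

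For part~2, I would define $\Phi:\bigoplus_i M_i\to M$ by $(\m_1,\dots,\m_m)\mapsto\sum_i \m_i$, with candidate inverse $\m\mapsto(e_1\m,\dots,e_m\m)$.
\begin{itemize}
\item Surjectivity follows from $\m = 1\cdot \m = \sum_i e_i\m$.
\item Injectivity: if $\sum_i \m_i = 0$ with $\m_i\in M_i$, I apply $e_j$ and use $e_je_i=0$ for $i\neq j$ to get $\m_j = 0$.
\item Compatibility: $\Phi$ is $\Z[\calG]$-linear when $\bigoplus_i M_i$ carries the componentwise action $(\alpha_1,\dots,\alpha_m)\cdot(\m_i) = (\alpha_i\m_i)$. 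This uses $\alpha_j\m_i = \alpha_j e_j e_i\m_i = 0$ for $j\neq i$.
\end{itemize}

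I expect uniqueness to be the main obstacle, because ``uniquely specify'' is somewhat vague. I would make it precise in two ways. First, the components of any $\m$ are forced, namely $\m_i = e_i\m$. Second, any decomposition $M=\bigoplus_i N_i$ into submodules with $e_i$ acting as the identity on $N_i$ must have $N_i = M_i$. The inclusion $N_i\subseteq M_i$ is immediate. For the reverse, I write $\m\in M_i$ as $\sum_j \m_j$ with $\m_j\in N_j$ and apply $e_i$; this shows $\m=\m_i\in N_i$.
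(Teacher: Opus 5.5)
Your proof is correct and follows the paper's argument. Like the paper, you use completeness $\m = \sum_i e_i\m$ for existence and apply $e_j$ with orthogonality for uniqueness; the paper's action $\alpha\cdot(e_i\m) := \tilde{\alpha}\cdot\m$ on $M_i$ agrees with your restricted action because $\tilde{\alpha} = \tilde{\alpha}e_i$, and your extra checks (that $1_{\Z[\calG]}$ acts as the identity, which the paper assumes when it writes $\m = 1\cdot\m$, the $\Z[\calG]$-linearity of $\Phi$, and the uniqueness of the summands $M_i$ themselves) are sound refinements rather than a different route.
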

\begin{proof}
(1) The subset $e_i M = \{e_i \cdot \m: \m\in M\}$ is an abelian
subgroup of $M$. For $\alpha \in \Z[\Z_{n_i}]$ and $e_i \m\in M_i$,
define
\[
  \alpha \cdot (e_i m) := \tilde{\alpha} \cdot \m 
\]
where $\tilde{\alpha} = (0, \ldots, \alpha, \ldots, 0) \in
\prod_j \Z[\Z_{n_j}] \cong \Z[\calG]$ with $\alpha$ in position $i$.
Since $e_i$ acts as the identity on $M_i$ and annihilates $M_j$ for
$j \neq i$, this is well-defined and gives a $\Z[\Z_{n_i}]$-module
structure.

(2) By completeness, every $\m \in M$ can be written as
\[
  \m = 1 \cdot \m= \left(\sum_{i=1}^{m} e_i\right) \cdot \m
  = \sum_{i=1}^{m} e_i \m
\]
with $e_i \m \in M_i$. Furthermore, by orthogonality, if $\sum_i \m_i = 0$ with
$\m_i \in M_i$, then applying $e_j$ gives $\m_j = 0$. Thus, $\sum_i \m_i$ is linearly independent for every $\m$, which uniquely specifies the   decomposition of $M$ as a $\Z[\calG]$-module.    
\end{proof}

Now let us discuss the module theory for $\Zb[\calG]$. This will follow the  general categorical definition of modules over rings with several objects, as     enriched functors  \cite{Mitchell1972, mitchell1985, Kelly1982,   mitchener2014}.  Here, we will restrict to the case $\Zb[\calG]$, where $\calG$ is a groupoid.   

\begin{definition}[ \cite{Mitchell1972, mitchell1985} ] 
\label{defZbmod}  
Let $\calG$ be a groupoid. A \emph{left $\Zb[\calG]$-module} is an
Ab-enriched functor to the category of abelian groups: 
\[
  M : \Zb[\calG] \longrightarrow \Ab  
\]
Such that, for each object $i \in \Ob(\calG)$, this assigns an abelian group $M_i$. 

And, for each morphism $a \in \Hom(i,j)$, this assigns a group homomorphism $M(a) : M_i \to M_j$. 

Furthermore, the above satisfy two functoriality axioms referring to the identity and composition of morphisms:  
\begin{enumerate}
    \item  $M(id_i) = \mathrm{id}_{M_i}$ for all objects $i$.
    \item  $M(b \circ a) = M(b) \circ M(a)$ for all composable morphisms $a, b$.
\end{enumerate}
\end{definition}

Moreover, we also have \cite{Kelly1982, mitchell1985}:   
\begin{definition} 
A \emph{morphism of left $\Zb[\calG]$-modules} is an Ab-enriched natural transformation
$\eta : M \Rightarrow N$, consisting of group homomorphisms
$\eta_x : M_x \to N_x$ for each $x \in \Ob(\calG)$ such that for
every $\alpha \in \Zb[\calG](x,y)$ the following compatibility condition holds: 
\[
  \eta_y \circ M(\alpha) = N(\alpha) \circ \eta_x.
\]
The module category is denoted by:  
\[
   \Zb[\calG]\textbf{-Mod}  := \AbFun(\Z[\calG],\, \Ab).
\]
\end{definition}

The following corollary will be useful for practical evaluations: 
\begin{corollary}  
By Proposition~\ref{prop4.1v1}, we have:  
\[
  \Zb[\calG]\textbf{-Mod}  \;\cong\; \Fun(\calG,\, \Ab).
\]
Therefore, specifying a $\Zb[\calG]$-module is equivalent to specifying an
ordinary functor $\calG \to \Ab$.
\end{corollary}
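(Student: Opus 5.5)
The plan is to specialize Proposition~\ref{prop4.1v1} to the target $\Z$-algebroid $\mathcal{A} = \Ab$. The first step is to check that $\Ab$ is itself a $\Z$-algebroid. For abelian groups $A,B$, the hom-set $\Hom_{\Ab}(A,B)$ is an abelian group under pointwise addition, $(f+g)(a) = f(a)+g(a)$. Composition is $\Z$-bilinear: $(f+g)\circ h = f\circ h + g\circ h$ is immediate from the pointwise definition, and $h\circ(f+g) = h\circ f + h\circ g$ holds because $h$ is a homomorphism. Associativity and units are inherited from $\Set$. There is a size caveat, since $\Ab$ is not small. The construction in the proof of Proposition~\ref{prop4.1v1} only uses the hom-groups of $\mathcal{A}$ and never smallness, so this does not affect the argument.

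With $\mathcal{A} = \Ab$, Proposition~\ref{prop4.1v1} gives an isomorphism of categories
\[
\AbFun(\Zb[\calG],\Ab) \cong \Fun(\calG,\Ab),
\]
induced by restriction along $\iota$. The left-hand side is $\Zb[\calG]\textbf{-Mod}$ by definition. The notation in that definition writes $\Z[\calG]$ where $\Zb[\calG]$ is meant, and I would read it that way. The key thing to confirm is that the isomorphism of Proposition~\ref{prop4.1v1} really is an isomorphism of categories, not just a bijection on objects. It must also match the morphisms, that is, Ab-enriched natural transformations on the left and ordinary natural transformations on the right.

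For the morphisms, take $\eta : M \Rightarrow N$ with components $\eta_x$. Naturality with respect to all $\alpha \in \Zb[\calG](x,y)$ restricts to naturality with respect to basis morphisms $g \in \calG(x,y)$. Conversely, suppose $\eta_y \circ M(g) = N(g)\circ\eta_x$ for every $g$. For $\alpha = \sum_i a_i g_i$, the enriched functors $M$ and $N$ act additively on hom-groups, and composition in $\Ab$ is bilinear. Hence
\[
\eta_y \circ M(\alpha) = \sum_i a_i\, \eta_y\circ M(g_i) = \sum_i a_i\, N(g_i)\circ \eta_x = N(\alpha)\circ\eta_x .
\]
So the same family $\{\eta_x\}$ is natural in both senses, and restriction is bijective on hom-sets as well as on objects.

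The only real obstacle is this morphism-level check, together with the bookkeeping that $\Ab$ qualifies as a $\Z$-algebroid. Everything else is a direct citation. The final sentence of the corollary, that specifying a $\Zb[\calG]$-module amounts to specifying an ordinary functor $\calG\to\Ab$, is then the object-level part of the isomorphism. Concretely, for $\calG = \calG(n_1,\ldots,n_m)$ such a functor is a family of abelian groups $M_i$, each carrying an action of $\Z_{n_i}$ by automorphisms.
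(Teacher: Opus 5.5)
Your proposal is correct and follows the paper's route: the paper obtains the corollary simply by taking $\mathcal{A}=\Ab$ in Proposition~\ref{prop4.1v1}. Your extra checks fill in details the paper leaves implicit, since its proof of Proposition~\ref{prop4.1v1} only treats the correspondence on objects. These checks are that $\Ab$ is a $\Z$-algebroid, that the definition's $\AbFun(\Z[\calG],\Ab)$ should read $\Zb[\calG]$, and that restriction along $\iota$ is bijective on natural transformations by $\Z$-linearity.
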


We can now apply the above definition to the totally disconnected groupoid  $\calG = \calG(n_1, \ldots, n_m)$.   Then, we have:   
\begin{remark} 
\label{remZbmod}
Given $\calG(n_1, \ldots, n_m)$,  a left $\Zb[\calG]$-module $M$  consists of the following:    
\begin{enumerate}  
  \item A collection of abelian groups $M_1, \ldots, M_m$.
  \item For each morphism $a \in \Hom(i,i) = \Z_{n_i}$,  a group homomorphism
$M(a) : M_i \to M_i$, which defines an action of $\Z_{n_i}$ on $M_i$.  
\end{enumerate}

And, functoriality axioms implying:       
\begin{enumerate}
  \item $M(0) = \mathrm{id}_{M_i}$ 
  \item $M(a+b) = M(a) \circ M(b)$ 
\end{enumerate}
 
Moreover, since there are no morphisms between distinct objects in $\calG$, the  modules $M_i$ are completely independent of one another.
\end{remark}

Placing everything together, it is now easy to see that the left modules of $\Z[\calG]$ and $\Zb[\calG]$ are equivalent. More generally, the equivalence between modules of a small $k$-algebroid, to the modules of a $k$-ring associated to it (where the ring forgets about the object set), has been known since \cite{Mitchell1972, mitchell1985}\footnote{In \cite{Mitchell1972}, these were referred to as matrix rings of a (pre)additive category.}. For the special case we are interested in here, the equivalence of modules can be seen using a straightforward construction.  

\begin{proposition}
There is an isomorphism of categories: 
\[  \Z[\calG]\textbf{-Mod}  \;\cong\;  \Zb[\calG]\textbf{-Mod}     \]
\end{proposition}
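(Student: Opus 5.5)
I will build two functors and check that they are mutually inverse on the nose, using the idempotent decomposition of Proposition~\ref{prop-id} in one direction and the corollary $\Zb[\calG]\textbf{-Mod}\cong\Fun(\calG,\Ab)$ in the other. Throughout, $\calG=\calG(n_1,\ldots,n_m)$ with $m$ finite, so $\Z[\calG]$ is unital with $1=\sum_i e_i$.

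\emph{From $\Z[\calG]$-modules to functors.} Given a left $\Z[\calG]$-module $M$, set $F_M(i):=M_i=e_iM$. For a morphism $a\in\Hom(i,i)=\Z_{n_i}$, let $F_M(a):M_i\to M_i$ be $\m\mapsto a\cdot\m$. Since $a=e_i a e_i$ in $\Z[\calG]$, this map lands in $M_i$. Associativity of the action gives $F_M(b\circ a)=F_M(b)\circ F_M(a)$, and $F_M(\mathrm{id}_i)=F_M(e_i)$ acts as the identity on $M_i$. So $F_M$ is a functor $\calG\to\Ab$, and by the corollary it extends uniquely to an Ab-enriched functor $\Zb[\calG]\to\Ab$. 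A $\Z[\calG]$-linear map $f:M\to N$ satisfies $f(e_i\m)=e_if(\m)$, so it restricts to maps $f_i:M_i\to N_i$ commuting with every $a_*$. These restrictions form a natural transformation, which gives functoriality of $M\mapsto F_M$.

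\emph{From functors to $\Z[\calG]$-modules.} Given $M:\Zb[\calG]\to\Ab$, set $\Phi(M):=\bigoplus_{i=1}^m M_i$. For a basis element $g\in\Hom(i,i)$ and $(\m_1,\ldots,\m_m)\in\Phi(M)$, define
\[
g\cdot(\m_1,\ldots,\m_m):=(0,\ldots,0,M(g)\m_i,0,\ldots,0),
\]
with the nonzero entry in position $i$, and extend $\Z$-bilinearly. I then check three things on basis elements, which suffices by bilinearity.
\begin{enumerate}
\item For $g\in\Hom(i,i)$ and $h\in\Hom(j,j)$ with $i\neq j$, both sides of $(gh)\cdot x=g\cdot(h\cdot x)$ vanish: $gh=0$ in $\Z[\calG]$, and $g$ kills the $j$-th slot where $h\cdot x$ lives.
\item For $i=j$, the identity reduces to functoriality $M(g\circ h)=M(g)M(h)$.
\item The element $e_i$ acts as projection onto the $i$-th slot, so $e_i\Phi(M)=M_i$. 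This gives the local-unit decomposition and the invertibility conditions (3)--(4) of Definition~\ref{def:alg-module}.
\end{enumerate}
A natural transformation $\eta$ induces $\bigoplus_i\eta_i$, which is $\Z[\calG]$-linear by the naturality square.

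\emph{Inverse check.} Starting from a functor $M$, the $i$-th summand $e_i\Phi(M)$ is literally $M_i$ with the same action, so $F_{\Phi(M)}=M$. Starting from a module $M$, Proposition~\ref{prop-id} gives $M\cong\bigoplus_i e_iM=\Phi(F_M)$ via $\m\mapsto(e_i\m)_i$, and this map intertwines the actions because $g\cdot\m=g\cdot e_i\m$ for $g\in\Hom(i,i)$.

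\emph{Main obstacle.} The difficulty is the word ``isomorphism'' rather than ``equivalence''. The composite $\Phi\circ F$ returns $\bigoplus_ie_iM$, which is only canonically isomorphic to $M$, not equal to it. I would first try to settle this by identifying $M$ with its internal direct sum $\bigoplus_i e_iM$, which Proposition~\ref{prop-id} justifies. Under that identification the map $\m\mapsto(e_i\m)_i$ is the identity and both composites are identities. If this identification is considered too loose, I would instead prove an equivalence of categories with the natural isomorphism above, which is the standard Mitchell statement the paper cites. The remaining verifications are routine bilinearity checks.
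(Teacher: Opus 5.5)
Your proposal is correct and takes essentially the same route as the paper: the paper likewise sends a $\Z[\calG]$-module to the functor $x\mapsto e_xM$, $\alpha\mapsto\alpha_*$, and sends a functor to $\bigoplus_x M_x$, with $\alpha$ acting by $M(\alpha)$ on its own summand and by zero elsewhere; it omits the treatment of morphisms and the round-trip checks that you supply. Your caveat is well placed and in fact applies to both composites, since $e_i\Phi(M)$ is the $i$-th coordinate copy of $M_i$ rather than literally $M_i$; so the argument strictly establishes an equivalence via the natural isomorphism $\m\mapsto(e_i\m)_i$, which becomes an isomorphism only after identifying each module with its internal direct sum.
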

\begin{proof}
A left $\Z[\calG]$-module $M$ decomposes as $\bigoplus_{x} M_x$ with idempotents $e_x$ acting as projections onto $M_x$.  
Define a functor  $\tilde{M} : \Zb[\calG] \to \Ab$   by 
$$\tilde{M}_x := M_x  \qquad  \mbox{and}  \qquad  \tilde{M}(\alpha) := \alpha_* : M_x \to M_y$$ 
for $\alpha \in \Zb[\calG](x,y)$.  
The conditions in Definition~\ref{def:alg-module} ensure this is Ab-enriched.

Conversely, given an Ab-enriched functor $M : \Zb[\calG] \to \Ab$, form the abelian  group $\overline{M} = \bigoplus_{x \in \Ob(\calG)} M_x$. Then, define the action of $\alpha \in \Zb[\calG](x,y)$ on $\m \in M_x$ by
$$\alpha \cdot \m := (M(\alpha))(\m) \in M_y$$  
(and zero on other components). Then, $M(id_x)$ corresponds to the idempotent $e_x$, and the functoriality axioms of  Definition~\ref{defZbmod} ensure that Definition~\ref{def:alg-module} is satisfied.  
\end{proof}
Thus, $\Z[\calG]$ and $\Zb[\calG]$ can be thought of as different presentations of the same module theory, though the algebroid is the more natural categorical object.

\subsection{Multivariate Alexander Modules \& their Category of Elements}

Equipped with the formalism above, we now define multivariate Alexander modules from $\Z$-algebroids (and groupoid rings). First, let us recall that classical univariate Alexander modules are defined over the group ring  $\Z[\Z]$ of the infinite cyclic group. The underlying set consists of a  $\Lam$-module $M$, where $\Lam = \Z[t, t^{-1}]$  (isomorphic to $\Z[\Z]$) is the ring of Laurent polynomials in $t$, over $\Z$, with $t$ acting as a generator of deck transformations. We generalize this definition, working with modules over  groupoid rings and algebroids in the following way:    
 
\begin{definition}[Multivariate Alexander Module] 
\label{mamdef}
Let $\calG = \calG(n_1, \ldots, n_m)$ be the totally disconnected groupoid with  hom-sets $\Hom (i,i)$ as cyclic groups $\Z_{n_i}$.  
A \emph{multivariable Alexander (left) module} is an $\Ab$-enriched functor from the $\Z$-algebroid  $\Zb[\calG]$ to the category of abelian groups
\[
  M \;:\; \Zb[\calG] \longrightarrow \Ab
\]
Equivalently, it is a locally unital left module over the groupoid ring
$\Z[\calG]$, graded by idempotents $\{ e_i \}$  as 
\[  M = \bigoplus_{i = 1}^m  M_i   \]   
where each   $M_i = e_i M$  is a $\Z[\Z_{n_i}]$-module, with a choice of generator $t_i \in \Z_{n_i}$ for each $i$,  determining an isomorphism
\[ \Z[\Z_{n_i}] \cong \Z_{n_i}[t_i, t_i^{-1}]/( p_i (t_i) )  \] 
where the polynomial $p_i (t_i)$ is such that $t_i$ is invertible in the quotient.  
For an element $x \in M_i$, we write $t_x := t_i$ for its \emph{t-modulus}.
\end{definition}

Compared to the univariate Alexander module, wherein  the action of the deck group induces the module structure over the Laurent polynomial ring; in the multivariate case, we have that each vertex group $\Z_{n_i}$ of $\calG$ induces its own $\Z[\Z_{n_i}]$-module, with a designated generator $t_i$.  Thus, multivariate Alexander modules are oidified versions of Alexander modules, constructed from groupoids encoding $m$ deck groups.  

However, notice that the module $M$ in Definition~\ref{mamdef} is a functor (equivalently, a module over a groupoid ring), rather than a classical module. Its data includes the collection $\{ M_1, ... , M_m \}$, which are themselves  modules corresponding to individual objects in $\calG$. On the other hand, a quandle operation is defined over a set, associated to a module.  
In the univariate case, the underlying set of $M$ is simply the elements of $M$. Now how does one derive a set from a functor $M \colon \calG \to \Ab$ in $\Zb[\calG]\textbf{-Mod}$ $\!$?  

This can be done via a standard functorial construction, which involves ``unpacking" the structure of  $M$ to get the corresponding  \emph{category of elements}   \cite{grothendieck71, riehl2016category, nlab:category_of_elements}. The latter is a special case of the Grothendieck construction for covariant functors  \cite{grothendieck71}.  We then have (\cite{riehl2016category, nlab:category_of_elements}):    
\begin{definition}  
\label{defele}
Let $U \colon \Ab \to \Set$ be the forgetful functor from $\Ab$ to the category of sets $\Set$,  and
$M \colon \calG \to \Ab$ a module. Then,  
\[
  X(M) \;:=\; \Ob\!\left(\Gint{\calG} U \circ M\right)  
\]
where $\Gint{\calG}{U \circ M}$ is the \emph{category of elements} of
$U \circ M \colon \calG \to \Set$: its objects are pairs $(i, \m_i)$
with $\m_i \in M(i)$, and a morphism $(i, \m_i) \to (j, \m_j)$ is a morphism
$g \colon i \to j$ in $\calG$ with $(M(g))(\m_i) = \m_j$.  
\end{definition}

Furthermore, the projection functor  
\[  \pi \colon \Gint{\calG}{U \circ M}  \to \calG,   \qquad     (i, \m_i) \mapsto i  \] 
  is a discrete opfibration  \cite{riehl2016category, nlab:category_of_elements}.   Its fibre over $i$ is the discrete category on $M_i$.   In the sense of  $\Set$ as the classifying space of $\Set$-bundles, $\Gint{\calG}{U \circ M}$ is the $\Set$-bundle classified by  $U \circ M$  \cite{riehl2016category, nlab:category_of_elements}.

Then, by Definition~\ref{defele}, we get:    
\[
  \Ob\!\left(\Gint{\calG} U \circ M\right)
  \;=\; \bigsqcup_{i \in \Ob(\calG)} U(M(i))
  \;=\; \bigsqcup_{i \in \Ob(\calG)} M_i,
\]
Hence, the category of elements formally unveils the structure of the functor  $M$ into its tagged elements, with morphisms inherited from
$\calG$. In this way, $X(M)$ turns out to be precisely the (set-valued) disjoint union of modules $M_i$.  Moreover, since $\Hom_\calG(i,j) = \varnothing$ for $i \neq j$, all morphisms in $\Gint{\calG}{U \circ M}$  are endomorphisms, and therefore we have that $\Gint{\calG}{U \circ M}$  is itself a totally disconnected groupoid.  This circles back to our discussion in Section~3, with $X(M)$ thus reflecting the structure of the original groupoid $\calG$ encoding deck groups.

As an alternate construction, let us show how we can also extract the same set $X(M)$  from a module $M = \bigoplus_{i=1}^{m} M_i$  in $\Z[\calG]\textbf{-Mod}$;  even though this construction will be less aesthetic (from a manifestly categorical point of view) than the functorial one above.      

In this case, we will think of $M = \bigoplus_{i \in X} M_i$ as an $X$-graded module (see \cite{nastasescu2004methods}) with $M_i := e_i M$, and $X := \Ob(\calG) = \{ 1, ... ,m \}$.  Consider the product space $X \times M$, where $X$ is viewed as a discrete space. The projection $X \times M \to X$ can be thought of as the trivial bundle over $X$ with constant fiber $M$. We then have:
\begin{definition}  
The \emph{tagged set} underlying $M$ is 
\[
  X(M) := \{(i, \m) \in X \times M \mid e_i \m = \m\}    \]
with the projection map  
\[   \pi : X(M) \to X, \qquad (i, \m) \mapsto i
\]
\end{definition}

Notice that we have $e_i \m = \m \iff \m \in M_i$. Therefore, the fiber over $i$ is $\{i\} \times M_i$, which yields the disjoint union   
\[
  X(M) = \bigsqcup_i M_i   \quad \text{with} \quad  |X(M)| = \sum_i |M_i|  
\]
And the $\Z[\calG]$-action on $M$ then restricts to a fiber-wise action on $X(M)$. For  $\alpha \in  \Z[\Z_{n_i}]$ and $(j, \m_j) \in X(M)$, we have: 
\[
  \alpha \cdot (j, \m_j) =
  \begin{cases}
    (i,  \alpha \cdot  \m_i) & \text{if } j = i  \\
    0         &  \text{if } j \neq i
  \end{cases}
\]
with $e_i (\alpha \m_i) = e_i (e_i \alpha e_i \m_i)  = \alpha \m_i$ and  $\alpha \m_j = e_i \alpha e_i (e_j \m_j)  = 0$.  Moreover, since $\alpha \in \Hom(i,i) = \Z_{n_i}$,  the action reduces to each vertex group acting on its own fiber.  
This makes each fiber $M_i$ a $\Z_{n_i}$-set (a $\Z[\Z_{n_i}]$-module), with no cross-fiber action. Thus, $X(M)$ is now realized as a graded set over $X = \Ob(\calG)$ with a fiber-wise $\Z[\calG]$-action (or, $\Z$-linearized $\calG$-action).

The above constructions, lead to the equivalence: 
\begin{remark}
 The tagged set of a $\Z[\calG]$-module $M$ is thus equivalent to the object set of the category of elements of the functor $U \circ M$, built from (the functor) $M$:    
\[
  X(M)
  = \Ob\!\left(\Gint{\calG} U \circ M\right)
  = \{(i, \m) : e_i \m = \m\}
  = \bigsqcup_i e_i M  
\]
with fiber-wise $\calG$-action given by the morphisms of  $\Gint{\calG} U \circ M$ (equivalently, by the elements of the ring $\Z[\calG]$).   
\end{remark}

\section{Multivariate Alexander Quandles}

Having a precise definition for the oidified Alexander modules and their category of elements, we now define multivariate quandles:    

\begin{definition}[Multivariate Alexander Quandle]  
\label{def:maquand}
Given $M$ a multivariate Alexander module, and 
\[
  X(M) \;:=\; \Ob\!\left(\Gint{\calG} U \circ M \right) \;=\; \bigsqcup_{i=1}^{|\Ob(\calG)|} M_i  
\]
its tagged set, i.e., the object set of the category
of elements of the underlying set-valued functor $U \circ M$,  a  \emph{multivariate Alexander quandle} on $M$ is a quandle $(X(M), *, \bar{*})$ satisfying the axioms (Definition~\ref{qaxioms}) of idempotence, right-invertibility, and right-self-distributivity, together with the following conditions: 

\begin{enumerate}  
  \item  The constraints:  
  $$t_{(a * b)} = t_{a}  \qquad \mbox{and}  \qquad  t_{(a \bar{*} b)} = t_a$$ 
  ensuring that the operation is closed on each fiber $M_i$ ($1 \leq i \leq |\Ob(\calG)|$), whenever $a \in M_i$, for every $b \in M$, such that $a * b \in M_i$  (likewise, for the inverse operation).   

  \item  A general affine operation:    
   \[
          (a_1, a_2) * (b_1, b_2) \;=\; \bigl(a_1, \; T_{ a_1, b_1 }\, a_2 + (1 - F_{ a_1, b_1 })\, b_2 \bigr)  
        \]
  where the tagged elements $a$ and $b$ are explicitly denoted as $(a_1, a_2)$ and  $(b_1, b_2)$ respectively, with tagging labels $a_1, b_1$; and the matrix elements $T_{ a_1, b_1 }$, $F_{ a_1, b_1 }$ are subject to the 3 quandle axioms and invertibility within the module.  
  
With inverse operation:  
  \[
          (a_1, a_2) \; \bar{*} \; (b_1, b_2) \;=\; \bigl(a_1, \; (T_{ a_1, b_1 })^{-1}\; a_2 + (T_{ a_1, b_1 })^{-1} \, (F_{ a_1, b_1 } - 1)\; b_2 \bigr)   
        \]

\item   When $a_1 = b_1$, that is, both $a$ and $b$ belong to the same fiber  $M_i$, associated to the invertible element $t_i$ (equivalently, $t_a$), the quandle operation reduces to the Alexander operation: 
  \[
          (a_1, a_2) * (a_1, b_2) \;=\; \bigl(a_1, \; t_{i}\, a_2 + (1 - t_{i})\, b_2 \bigr) 
        \]
\end{enumerate}
\end{definition}

Let us contrast Definition~\ref{def:maquand} to that of Traldi in \cite{traldi1}:
\begin{remark}   
In Traldi's construction \cite{traldi1}, the Alexander module $M_A(L)$ of a $m$-component link is a single module over the commutative Laurent polynomial ring $\Lam_m = \mathbb{Z}[t_1^{\pm 1}, \ldots, t_m^{\pm 1}]$, presented as the cokernel of a map associated to a link diagram. The total multivariate Alexander quandle is then a subset of the module $M_A(L)$, defined by the  epimorphism appearing in the link module sequence  (see   \cite{crowell1969elementary, hillman1981alexander}),  and all $m$ variables act on all module elements simultaneously \cite{traldi1}. 

In our construction here, the quandle is defined on $X(M)$, which is the object set of the category of elements  $\Gint{\calG} U \circ M$, and not on a subset of the module $M$. Moreover, each variable $t_i$ acts only on its own 
fiber $M_i$, defined by the action of $\Z_{n_i}$ on $M_i$.

When $m = 1$, both definitions recover the standard Alexander quandle.  
\end{remark}

Next, we determine explicit solutions for the matrices $T$ and $F$ appearing in Definition~\ref{def:maquand}. This will provide us with distinct classes of multivariate quandle operations. First, we shall consider the case when    $M_1, ... , M_m$ are all of identical order (though their $t$-moduli may well be different); and then, the case involving non-identical orders.

\subsection{Operations with $\Z$-Modules of Identical Order}

Consider a module $M$ with each $M_i$ of order $n$. The tagged set can be expressed as 
\[  X(M) = \bigsqcup_{i=0}^{m - 1} ( i, \;  \Z_{n_i} [t_{i+1}, t_{i+1}^{-1}]/p_i (t_{i+1}) )   \]   
where it shall be convenient to have the tagging index $i$ run from $0$ to $m-1$. The order of each $\Z_{n_i} [t_{i+1}, t_{i+1}^{-1}]/p_i (t_{i+1})$ above is 
$n_i^{deg (p_i)}$, which is fixed to $n$. The cardinality of $X(M)$ is $m \times n$.  In the notation $(a_1, a_2)$ for elements $a \in X(M)$, the tagging index $a_1$ can be thought of as elements in $\Z_m$, as in $a_1 \in \Z_m$.

We then have:  
\begin{proposition}
Given $M$ a (finite) multivariate Alexander module with each $M_i$ of fixed order $n$, the matrices $T = (T_{ a_1, b_1 })$ and $F = (F_{ a_1, b_1 })$ are completely specified by Definition~\ref{def:maquand}.  In particular, idempotence implies $T_{ a_1, a_1 } = F_{ a_1, a_1 } = t_a$,  and  right-self-distributivity with $t_{(a * b)} = t_{a}$, determines three general solution classes of multivariate quandle operations:  
\begin{eqnarray*}
  &\text{(i) }& T = F, \qquad  \;\,\   T_{ a_1, b_1 } = t_b   \\  
  &\text{(ii) }& T = F, \qquad  \;\,\  T_{ a_1, b_1 } = \delta_{ a_1, b_1 } \, t_a + (1 - \delta_{ a_1, b_1 })   \\  
  &\text{(iii) }& T_{ a_1, b_1 } = t_b, \hspace{,5cm}  F_{ a_1, b_1 } = t_a
\end{eqnarray*}
\end{proposition}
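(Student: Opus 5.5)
The plan is to write the general affine operation of Definition~\ref{def:maquand} explicitly and impose the three axioms of Definition~\ref{qaxioms} coefficient by coefficient. Since every fiber $M_i$ has the same order $n$, all fibers share one underlying abelian group, so a coefficient $T_{a_1,b_1}$ or $F_{a_1,b_1}$ can multiply an element of any fiber. Throughout, I write $T_{ab}$ for $T_{a_1,b_1}$, and similarly for $F$ and $\delta$.

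\emph{Idempotence and invertibility.} Computing $a*a=\bigl(a_1,(T_{aa}+1-F_{aa})a_2\bigr)$ and requiring it to equal $a$ gives $T_{aa}=F_{aa}$. Condition~3 of Definition~\ref{def:maquand} then fixes the common value to be $t_a$. Right-invertibility only requires each $T_{ab}$ to be invertible, which is what makes the stated $\bar{*}$ a two-sided inverse of $*$. This step is routine.

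\emph{Self-distributivity.} This is the core step. I use the constraint $t_{(a*b)}=t_a$, i.e.\ that $a*b$ keeps the tag $a_1$, so that the tags of $a*c$ and $b*c$ are $a_1$ and $b_1$. Expanding both sides gives
\begin{align*}
(a*b)*c &= \bigl(a_1,\;T_{ac}T_{ab}\,a_2+T_{ac}(1-F_{ab})\,b_2+(1-F_{ac})\,c_2\bigr),\\
(a*c)*(b*c) &= \bigl(a_1,\;T_{ab}T_{ac}\,a_2+(1-F_{ab})T_{bc}\,b_2+\bigl[T_{ab}(1-F_{ac})+(1-F_{ab})(1-F_{bc})\bigr]c_2\bigr).
\end{align*}
Comparing coefficients, the $a_2$-terms agree by commutativity. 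This leaves two functional equations, required for all tags:
\[
(\mathrm{E}1)\;\; (1-F_{ab})(T_{ac}-T_{bc})=0, \qquad (\mathrm{E}2)\;\; (1-F_{ac})(1-T_{ab})=(1-F_{ab})(1-F_{bc}).
\]

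\emph{Case split on (E1).} The first case is $T_{ac}=T_{bc}$ for all $a,b$. Then $T_{ac}$ depends only on $c$, and idempotence forces $T_{ab}=t_b$. Equation (E2) becomes $(1-F_{ac})(1-t_b)=(1-F_{ab})(1-F_{bc})$. I would try the two natural ansätze, that $F$ depends only on the second tag or only on the first, and then use the diagonal values $F_{aa}=t_a$ to pin them down. The ansatz $F_{ab}=t_b$ makes both sides equal to $(1-t_c)(1-t_b)$, which gives class (i). The ansatz $F_{ab}=t_a$ makes both sides equal to $(1-t_a)(1-t_b)$, which gives class (iii).

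The second case is $F_{ab}=1$ off the diagonal, which kills (E1) whenever $a_1\neq b_1$. I would take $T=F$ and use (E2) with distinct tags to force $T_{ab}=1$ off the diagonal. Combined with the diagonal value $t_a$, this gives $T_{ab}=\delta_{ab}t_a+(1-\delta_{ab})$, which is class (ii). Finally, I would check that each of the three classes satisfies (E2) for every configuration of equal and unequal tags among $a,b,c$.

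The main obstacle is completeness: showing these three classes exhaust all solutions. The difficulty is that $1-F_{ab}$ may be a zero divisor in a finite ring, so (E1) does not cleanly split into the two cases above. The first thing I would try is to restrict to the generic situation in which the $1-t_i$ are non-zero-divisors. In that situation (E1) does split cleanly, and (E2), specialised to configurations with $a_1=b_1$ or $b_1=c_1$ (where $T$ and $F$ reduce to $t$), forces $F$ to depend on a single tag index, leaving only the listed possibilities.
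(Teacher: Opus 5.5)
Your derivation of (E1) and (E2), the diagonal values, and the check that (i)--(iii) satisfy both equations are correct. This is exactly the paper's proof: it matches the coefficients of $a_2,b_2,c_2$ and then exhibits the three classes as solutions, without arguing that nothing else solves the equations. The gap is your completeness step, and it cannot be closed, because the three classes are not exhaustive even in the generic situation you propose.

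Write $i,j,k$ for the tags of $a,b,c$, so (E1) reads $(1-F_{ij})(T_{ik}-T_{jk})=0$ and (E2) reads $(1-F_{ik})(1-T_{ij})=(1-F_{ij})(1-F_{jk})$.

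\emph{The dichotomy is not global.} Equation (E1) constrains each ordered pair $(i,j)$ separately: at best, $F_{ij}=1$ or rows $i$ and $j$ of $T$ agree. So your two-case split is not forced once $m\ge 3$. For $T=F$ both equations reduce to $(1-T_{ij})(T_{ik}-T_{jk})=0$. This is solved by $T=F=\bigl(\begin{smallmatrix} t_1&t_2&1\\ t_1&t_2&1\\ 1&1&t_3\end{smallmatrix}\bigr)$, i.e.\ class (i) on tags $1,2$ with trivial action between blocks. It lies in none of (i)--(iii) when $t_2,t_3\neq 1$.

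\emph{Case 1 uses uninformative configurations.} When $T_{ij}=t_j$, the configurations you intend to use, $a_1=b_1$ or $b_1=c_1$, turn (E2) into a tautology. The informative ones are $i=k\neq j$ and $i,j,k$ distinct. Suppose the $1-t_i$ are non-zero-divisors, hence units since the ring is finite. Then these say exactly that $u_{ij}:=(1-F_{ij})(1-t_j)^{-1}$ satisfies $u_{ii}=1$ and $u_{ij}u_{jk}=u_{ik}$. Equivalently, $1-F_{ij}=\lambda_i\lambda_j^{-1}(1-t_j)$ for arbitrary units $\lambda_i$. So $F$ need not depend on a single index. Over $\Z_5$ with $t_1=2$, $t_2=3$, the pair $T=\bigl(\begin{smallmatrix}2&3\\2&3\end{smallmatrix}\bigr)$, $F=\bigl(\begin{smallmatrix}2&0\\4&3\end{smallmatrix}\bigr)$ is a solution outside (i)--(iii). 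These solutions are fiberwise rescalings of class (i), and (iii) is the member $\lambda_i=1-t_i$. Hence, generically, (i) and (iii) even give isomorphic quandles.

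\emph{Case 2 is also incomplete.} There $T=F$ is assumed, not derived. With $F_{ij}=1$ off the diagonal, (E2) at $i=k\neq j$ yields only $(1-t_i)(1-T_{ij})=0$. When $t_i=1$, as in the paper's own example $\Z_3[t,t^{-1}]/(t-1)\sqcup\Z_3[t,t^{-1}]/(t-2)$, the entry $T_{ij}$ is unconstrained. For instance, $T=\bigl(\begin{smallmatrix}1&2\\1&2\end{smallmatrix}\bigr)$, $F=\bigl(\begin{smallmatrix}1&1\\1&2\end{smallmatrix}\bigr)$ satisfies (E1) and (E2). Numbering elements as in that example, it has three trivial right translations and inner-automorphism orbits $\{1\},\{2,3\},\{4,5,6\}$. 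So it is isomorphic to none of the three tables listed there.

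What can actually be proved, by you or by the paper, is the existence half: the diagonal is forced, and (i)--(iii) are solutions. Any exhaustiveness claim would have to be reformulated, for instance up to fiberwise rescaling and block decomposition.

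A smaller point: equal order does not give a common underlying group (compare $\Z_4$ with $\Z_2[t,t^{-1}]/(t^2+t+1)$). For the affine formula to make sense you must assume every fiber is $\Z_n$, as the paper tacitly does by working mod $n$.
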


\begin{proof}
Given $a, b \in X(M)$, let us express a multivariate Alexander quandle operation of finite order as a specification of the pair:      
$$a * b = (a_1 \bullet b_1, a_2 \bullet b_2)$$   
where 
\begin{eqnarray}
a_1 \bullet b_1 &:=& a_1 \pmod{m}  \label{pr1}  \\
a_2 \bullet b_2 &:=& T(a_1, b_1) a_2 + (1 - F(a_1, b_1)) b_2 \pmod{n}   \label{pr2} 
\end{eqnarray}  
Here $T (a_1, b_1)$ and $F (a_1, b_1)$ are elements of $m \times m$ matrices (the notation used here is simply a more readable form of $T_{a_1, \, b_1}$). We now show that the above matrix elements are determined by imposing the quandle axioms. Moreover, the condition in eq.~(\ref{pr1})  turns  out to be a constraint on the parameters $t_i$.  

Firstly, notice that idempotence implies that the diagonal entries of $T$ and $F$ are identical. Next, imposing right self-distributivity yields the following terms on the left-hand side (of the self-distributivity axiom): 
\begin{eqnarray}
T(a_1, c_1) \left( T(a_1, b_1) a_2 + (1 - F(a_1, b_1)) b_2  \right) + (1 - F(a_1, c_1)) c_2
 \label{pr3} 
\end{eqnarray}
and, the following expression on the right-hand side: 
\begin{eqnarray}
T(a_1, b_1) \left( T(a_1, c_1) a_2 + (1 - F(a_1, c_1)) c_2  \right) + (1 - F(a_1, b_1)) \left( T(b_1, c_1) b_2 + (1 - F(b_1, c_1)) c_2  \right) 
 \label{pr4} 
\end{eqnarray}
Here, eq.~(\ref{pr1}) has been used within the arguments of $T$ and $F$. Matching the coefficients of $a_2, b_2, c_2$ respectively in eqs.~(\ref{pr3}) and (\ref{pr4}), we find the following three solution classes for $T$ and $F$: 

$Class \; (i)$:  When the off-diagonal entries of $T$ and $F$ are generally not identical
\begin{eqnarray}
T(a_1, c_1) &=& T(b_1, c_1)   \nonumber \\
F(a_1, c_1) &=& F(b_1, c_1)   \nonumber \\
T(a_1, b_1) &=& F(a_1, b_1)   \nonumber 
\end{eqnarray}
for any $a_1, b_1, c_1$. This implies $T = F$ as matrices, and the columns of $T$ have identical $t$-moduli
\[ T =  \begin{pmatrix}
    t_{1} & t_{2} & \cdots & t_{m} \\
    t_{1} & t_{2} & \cdots & t_{m} \\
    \vdots & \vdots & \ddots & \vdots \\
    t_{1} & t_{2} & \cdots & t_{m}
\end{pmatrix}    \]
Here $t_i$ is the $t$-modulus associated to $\Z_{n_i} [t_{i+1}, t_{i+1}^{-1}]/p_i (t_{i+1})$ with $1 \leq i \leq m$.  

With this form of $T$ and $F$, the quandle multiplication can be expressed as
\begin{eqnarray}
    a_1 \bullet b_1 &=& a_1 \pmod{m}  \label{op1} \\
    a_2 \bullet b_2 &=& t_{b} a_2 + (1 - t_{b}) b_2 \pmod{n}   \label{op2} 
\end{eqnarray}
where $t_{b}$ denotes the $t$-modulus associated to the element $b = (b_1, b_2)$.

$Class \; (ii)$: The next solution case we have is when the off-diagonal entries of $T$ and $F$ are identical. Then, 
\begin{eqnarray}
T(a_1, b_1) &=& 1   \nonumber \\
F(a_1, b_1) &=& 1   \nonumber \\
T(a_1, a_1) &=& F(a_1, a_1)   \nonumber 
\end{eqnarray}
for any $a_1, b_1$, which also implies $T = F$ as matrices, and $T$ takes the form
\[ T =  \begin{pmatrix}
    t_{1} & 1 & \cdots & 1 \\
    1 & t_{2} & \cdots & 1 \\
    \vdots & \vdots & \ddots & \vdots \\
    1 & 1 & \cdots & t_{m}
\end{pmatrix}    \]

Then, the quandle multiplication takes the form 
\begin{eqnarray}
    a_1 \bullet b_1 &=& a_1 \pmod{m}    \\
    a_2 \bullet b_2 &=& \left( \delta_{ a_1, b_1 } \, t_a + (1 - \delta_{ a_1, b_1 }) \right) a_2 + (1 - \left( \delta_{ a_1, b_1 } \, t_a + (1 - \delta_{ a_1, b_1 }) \right) ) b_2 \pmod{n}     
\end{eqnarray}

$Class \; (iii)$:  Finally, the equality of eqs.~(\ref{pr3}) and (\ref{pr4}) is also satisfied when $T(a_1, b_1) = t_b$ and $F(a_1, b_1) = t_a$. That is, as functions of $a_1$ and $b_1$, the matrix elements $T(a_1, b_1)$ and $F(a_1, b_1)$ map to the $t$-moduli corresponding to their second and first arguments respectively. Here, $t_{b}$ is associated to the element $b = (b_1, b_2)$, and $t_{a}$ is associated to the element $a = (a_1, a_2)$.  This gives the following matrix forms for $T$ and $F$: 
\[ T =  \begin{pmatrix}
    t_{1} & t_{2} & \cdots & t_{m} \\
    t_{1} & t_{2} & \cdots & t_{m} \\
    \vdots & \vdots & \ddots & \vdots \\
    t_{1} & t_{2} & \cdots & t_{m}
\end{pmatrix}     \quad  \quad 
F =  \begin{pmatrix}
    t_{1} & t_{1} & \cdots & t_{1} \\
    t_{2} & t_{2} & \cdots & t_{2} \\
    \vdots & \vdots & \ddots & \vdots \\
    t_{m} & t_{m} & \cdots & t_{m}
\end{pmatrix}    
\]

These matrices then lead to the quandle multiplication mentioned in Section~2.  
\begin{eqnarray}
    a_1 \bullet b_1 &=& a_1 \pmod{m}    \label{op5}  \\
    a_2 \bullet b_2 &=& t_{b} a_2 + (1 - t_{a}) b_2 \pmod{n}   \label{op6} 
\end{eqnarray}

\end{proof}

\begin{remark} 
\label{remtab}
Notice that, eq.~(\ref{pr1}) is equivalent to the constraint $t_{a\,*\,b} = t_a$ (since the components $a_1$ and $b_1$ refer to the module label, eq.~(\ref{pr1}) picks the label referring to the module associated to  $a$ in the product $a\,*\,b$).     

Moreover, for each of the above three solution classes,  eq.~(\ref{pr1}) is   crucial for eqs.~(\ref{pr3}) and (\ref{pr4}) to hold. If one tries to generalize eq.~(\ref{pr1}) to an operation of the form $a_1 \bullet b_1 = t a_1 + (1 - t) b_1$ for some $t \neq 1$, then self-distributivity immediately breaks down. In other words, the above three solution classes of multivariate Alexander quandle   operations only hold when $t_{a\,*\,b} = t_a$ is satisfied.  
\end{remark}

\subsection{Operations with $\Z$-Modules of Non-Identical Order}

We now consider the more general set-up, where $\Z$-modules $M_i$ of generally non-identical order are taken.  The tagged set is 
\[  X(M) = \bigsqcup_{i=0}^{m - 1} M_{i}  \qquad \mbox{with cardinalities}  \qquad  | M_{i} | = n_{i}  \]
 In this case, eqs.~(\ref{pr1}) and (\ref{pr2}) are replaced by: 
\begin{eqnarray}
a_1 \bullet b_1 &=& a_1 \pmod{m}  \label{pr1v2}  \\
a_2 \bullet b_2 &=& T(a_1, b_1) a_2 + (1 - F(a_1, b_1)) b_2  \pmod{n_{i}}   \label{pr2v2} 
\end{eqnarray}  
where $\pmod{n_{i}}$  refers to the $\Z$-module containing the element $a$.  

Once again, we can impose quandle axioms on eqs.~(\ref{pr1v2}) and (\ref{pr2v2}) to determine solutions to $T$ and $F$. However, the generality of non-identical orders makes things more nuanced.  The following holds: 
 
\begin{proposition}  
Given $M$ a multivariate Alexander module with each $M_i$ not necessarily of  identical order,  then eqs.~(\ref{pr1v2}) and (\ref{pr2v2})  yields a valid multivariate Alexander quandle operation when $T = F$ and  
\[ T =  \begin{pmatrix}
    t_{1} & 1 & \cdots & 1 \\
    1 & t_{2} & \cdots & 1 \\
    \vdots & \vdots & \ddots & \vdots \\
    1 & 1 & \cdots & t_{m}
\end{pmatrix}    \]
\end{proposition}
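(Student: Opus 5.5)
The plan is to verify the three quandle axioms directly for the operation
\[
(a_1,a_2)*(b_1,b_2) = \bigl(a_1,\; T(a_1,b_1)\,a_2 + (1-T(a_1,b_1))\,b_2\bigr),
\]
with $T(a_1,b_1)=t_a$ if $a_1=b_1$ and $T(a_1,b_1)=1$ otherwise. We also need to check that this formula makes sense when the fibres have different orders.

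\textbf{Step 1: well-definedness.} When $a_1\neq b_1$ the second coordinate is $1\cdot a_2 + 0\cdot b_2 = a_2$. The element $b_2\in M_{b_1}$ then enters only with coefficient $0$, so no arithmetic ever mixes $\pmod{n_{a_1}}$ with $\pmod{n_{b_1}}$. We read the coefficient $1-T = 0$ as the zero map $M_{b_1}\to M_{a_1}$. When $a_1=b_1$ both elements lie in the same $\Z[\Z_{n_i}]$-module $M_i$, and the operation is the ordinary Alexander operation there. Hence the product lies in $M_{a_1}$, so eq.~(\ref{pr1v2}) and the constraint $t_{a*b}=t_a$ hold.

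In short, $a*b = t_i a + (1-t_i)b$ if $a,b\in M_i$, and $a*b=a$ otherwise. This shows that $X(M)$ is the disjoint union of the Alexander quandles on the $M_i$, with the trivial action across different fibres.

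\textbf{Step 2: idempotence and right-invertibility.} Idempotence reduces to the single-fibre Alexander case, since $t_i a+(1-t_i)a=a$.

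For invertibility we define
\[
a\,\bar{*}\,b = t_i^{-1}a + t_i^{-1}(t_i-1)b \quad \text{if } a_1=b_1,
\]
and $a\,\bar{*}\,b = a$ otherwise. This is exactly the inverse formula of Definition~\ref{def:maquand} with $T=F$. The identities $(a*b)\,\bar{*}\,b = a = (a\,\bar{*}\,b)*b$ hold fibrewise: in the same-fibre case they are the standard Alexander computation, and in the cross-fibre case both sides are trivially $a$. Invertibility of $t_i$ in $\Z_{n_i}[t_i^{\pm1}]/(p_i)$ is part of Definition~\ref{mamdef}.

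\textbf{Step 3: right self-distributivity.} This is the main bookkeeping step. Instead of matching the coefficients of $a_2,b_2,c_2$ as in eqs.~(\ref{pr3})--(\ref{pr4}), whose validity needed equal orders, I will split into cases according to which of $a,b,c$ share a fibre. Throughout we use $(a*b)_1=a_1$.

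\begin{enumerate}
\item All three in one fibre: this is the univariate Alexander identity.
\item $a_1=b_1\neq c_1$: both sides equal $a*b$, because $a*c=a$, $b*c=b$, and $a*b$ lies in fibre $a_1\neq c_1$.
\item $a_1=c_1\neq b_1$: the left side is $(a*b)*c = a*c$. On the right, $b*c=b$ lies in fibre $b_1\neq a_1$, so $(a*c)*(b*c)=a*c$.
\item $b_1=c_1\neq a_1$: both sides are $a$.
\item All three fibres distinct: both sides are $a$.
\end{enumerate}

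The cases where the fibre label of an intermediate product matters are 2 and 3. There the key input is that $*$ preserves the first factor's label, so the fibre constraint of Remark~\ref{remtab} does the work. I expect this case analysis to be the only delicate point, and specifically making rigorous that the coefficient $1-T=0$ kills terms from a module of a different order. I will handle it by writing the operation piecewise, as in Step 1, rather than as a single formula mod $n_i$.
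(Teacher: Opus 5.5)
Your proof is correct and is essentially the paper's argument. The paper likewise uses the fact that the diagonal blocks are Alexander quandles and the cross-fibre action is trivial ($t=1$), and checks self-distributivity case by case according to which fibres $a,b,c$ lie in. The difference is only organizational: the paper checks the eight label-triples for $m=2$ and then iterates by treating $M_0\sqcup M_1$ as a single block. Your five-case split by fibre pattern, which includes the all-distinct case, handles every $m$ at once. It also makes explicit why the zero cross-coefficient is well defined when the fibres have different orders, a point the paper leaves implicit.
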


Notice that these are exactly the $class \; (ii)$ solutions that we have seen in the previous subsection. Let us prove that these solutions always exist when modules of generally non-identical order are taken. 

\begin{proof}
The proof can be constructed iteratively. First, consider the case for  $m = 2$, that is, $X(M)$ consists of $(0, \Z_{r_0} [t_{1}, t_{1}^{-1}]/p_0 (t_{1}))$  and  $(1, \Z_{r_1} [t_{2}, t_{2}^{-1}]/p_1 (t_{2}))$, with cardinalities $n_0 = r_0^{deg (p_0)}$ and $n_1 = r_1^{deg (p_1)}$ respectively.   

Now assume $T = F$, given by the matrix form:
\begin{eqnarray}  
\begin{pmatrix} t_1 & 1 \\ 1 & t_2 \end{pmatrix}  
\label{teq10}
\end{eqnarray}
Substituting this in eqs.~(\ref{pr1v2}) and (\ref{pr2v2}) for $m = 2$, we will  check that they satisfies the quandle axioms. 

Note that the quandle multiplication table will now have rectangular off-diagonal blocks of dimensions $n_0 \times n_1$ and $n_1 \times n_0$ respectively. The structure of this multiplication table is of the form:
\begin{eqnarray}
\begin{array}{r@{\mskip\thickmuskip}c}
& \begin{array}{c@{\hspace{7.0em}}c} \scriptstyle 0 & \scriptstyle 1 \end{array} \\[-0.5ex]
\begin{array}{r} \scriptstyle 0 \\ \scriptstyle 1 \end{array} & 
 \left(\begin{array}{c|c}
dim (n_0 \times n_0) & dim (n_0 \times n_1) \\ \hline
dim (n_1 \times n_0) & dim (n_1 \times n_1)
\end{array}\right) 
\end{array}
\label{tab1}
\end{eqnarray}
where the block-labels ("0" or "1" above and to the left of the table) denote the tagged coordinates, which refer to the elements of the $T$-matrix (for instance, the element $T_{0\,1}$ is the $t$-variable associated to entries in the upper right block of the quandle table above).  To keep track of the specific $\Z$-module $M_i$  referring to each element $a \equiv (a_1, a_2)$, we will hereon denote the $a_2$ by $a_2 (a_1)$ (for instance, $a_2 (0)$ refers to the module with $a_1 = 0$). 

For the multiplication table in eq.~(\ref{tab1}), idempotence and right-invertibility follow immediately since the diagonal blocks themselves are quandle tables, and the  entries in the rows of the off-diagonal blocks are determined by the trivial quandle using $t = 1$ (which implies every column in the table in eq.~(\ref{tab1}) is a permutation).   

Now, to check self-distributivity, we will work with triples $\left( a_2 (i), \, b_2 (j), \, c_2 (k) \right)$, where $i, j, k \in \{0, 1\}$.  We need to check the following $2^3$ combinations of these triples, which we can compute explicitly using the $T$-matrix given in eq.~(\ref{teq10}) above:       

$(i) \; \& \; (ii)$  $(a_2(0), \, b_2(0), \, c_2(0))$ and $(a_2(1), \, b_2(1), \, c_2(1))$ are both trivially self-distributive since they belong to the same module.
    
$(iii)$    $(a_2(0), \, b_2(0), \, c_2(1))$ evaluates to $( a_2(0) \bullet b_2(0) )$ on both sides of the self-distributivity identity.  
   
$(iv)$   $(a_2(0), \, b_2(1), \, c_2(0))$ gives $( a_2(0) \bullet c_2(0) )$ on both sides.
   
$(v)$    $(a_2(0), \, b_2(1), \, c_2(1))$ leads to $a_2(0)$ on both sides of the identity.

$(vi)$   $(a_2(1), \, b_2(0), \, c_2(1))$ gives $( a_2(1) \bullet c_2(1) )$.

$(vii)$   $(a_2(1), \, b_2(1), \, c_2(0))$ gives $( a_2(1) \bullet b_2(1) )$.

$(viii)$  $(a_2(1), \, b_2(0), \, c_2(0))$ gives $a_2(1)$.

Since all 8 of the above combination of triples satisfy right self-distributivity, the multiplication table in eq.~(\ref{tab1}) as a whole satisfies right self-distributivity for any disjoint union of $M_0$ and $M_1$ with cardinalities $n_0$ and $n_1$ respectively. This yields a quandle for $m = 2$.   

One can now continue the above process iteratively: we append another $\Z$-module $M_2$ to the above quandle, and express the resulting table once again in $2 \times 2$ block form. The quandle with $X(M) = M_0 \, \bigsqcup \,  M_1$ is now treated as a single diagonal block.  As before, the off-diagonal blocks are determined by the trivial quandle with $t = 1$; and analogous to the case above, we have 8 combinations of triples of elements, whose evaluations confirm self-distributivity in the same way. Such an iterative application can be carried out for any finite number of $\Z$-modules $M_i$. This concludes our proof.   
\end{proof}

\begin{remark}
In the case of modules $M_i$ of non-identical order, $class \; (i)$ solutions do not exist, by construction. Also, $class \; (iii)$ solutions, only exist in certain cases (we show one such explicit example in the subsection on enumeration).
\end{remark}

What kind of algebras are the quandles we have constructed here? To answer this, we have the following: 
\begin{proposition}
Multivariate Alexander quandles are semisimple algebras.  
\label{prop5.2v0}
\end{proposition}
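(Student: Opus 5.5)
The plan is to make precise which notion of ``semisimple'' is intended and then read it off from the fibre structure of $X(M)$. Following the remark after eq.~(\ref{foxmqdef2}), the constraints $t_{(a*b)}=t_a$ and $t_{(a\bar{*}b)}=t_a$ are conditions on right ideals of the quandle algebra. I will therefore work with the quandle algebra $k[X(M)]$, the free $k$-module on $X(M)$ with $*$ extended bilinearly, over a commutative ring $k$ ($k=\Z$, or a field when needed). I call a subset $I\subseteq X(M)$ (or a subspace of $k[X(M)]$) a \emph{right ideal} if $I * X(M)\subseteq I$ and $I\,\bar{*}\, X(M)\subseteq I$. The statement to prove is then: $k[X(M)]$ is the direct sum of right ideals indexed by $\Ob(\calG)$, with no nonzero cross terms, and each summand is an Alexander quandle algebra of a single fibre, so it admits no further splitting compatible with the tagging. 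In this sense the algebra is a direct sum of ``simple'' (single-fibre) pieces.

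First, I show that each fibre $M_i\subset X(M)$ is a right ideal. This follows directly from condition~1 of Definition~\ref{def:maquand}, or equivalently from eq.~(\ref{pr1}) and eq.~(\ref{pr1v2}): for $a=(a_1,a_2)$ with $a_1=i$ and any $b$, we have $a*b=(a_1,\dots)$ and $a\bar{*}b=(a_1,\dots)$, so both lie in $M_i$. This holds for each of the three solution classes of $T,F$. Second, I use Proposition~\ref{prop-id}, which says the idempotents $e_i$ give $M\cong\bigoplus_i M_i$ and $X(M)=\bigsqcup_i M_i$. Together these give
\[
k[X(M)] \;=\; \bigoplus_{i} k[M_i],
\]
a direct sum of right ideals. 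Third, I identify each summand. Condition~3 of Definition~\ref{def:maquand} shows that $*$ restricted to $M_i\times M_i$ is the ordinary Alexander operation with $t_i$, so $k[M_i]$ contains the univariate Alexander quandle algebra on $M_i$. Since every element of $k[X(M)]$ decomposes uniquely by the orthogonality of the $e_i$, the decomposition is complete. That is the semisimple decomposition.

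Finally, as a contrast that also justifies the remark in Section~2, I will check that for a $\Lm$-module over $\Z[t_1^{\pm1},\dots,t_m^{\pm1}]$ with all $t_j$ acting on the whole module, the sets $\{a: t_a=t_i\}$ are not stable under $a\mapsto t_b a+(1-t_a)b$ in general. So no such decomposition into right ideals exists there.

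The main obstacle is the minimality or simplicity of the summands. A single-fibre Alexander quandle can itself have proper subquandles, and over $\Z$ the algebra $k[M_i]$ need not be simple in the ring-theoretic sense. I would first try to establish simplicity relative to the $\calG$-grading, that is, to show that no proper graded right ideal splits off inside a fibre. If that fails, I would weaken the claim to the statement that the quandle algebra is a direct sum of the fibre right ideals, which is what the constraint $t_{(a*b)}=t_a$ actually delivers.
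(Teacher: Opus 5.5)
Your fallback is exactly the paper's proof. The constraint $t_{(a*b)}=t_a$ (eq.~(\ref{pr1})) makes each fibre $M_i$ stable under $a\mapsto a*b$ and $a\mapsto a\,\bar{*}\,b$ for all $b$. Together with $X(M)=\bigsqcup_i M_i$, this is the decomposition the paper calls semisimple, into ideals ``including possible 1-dimensional ideals''. The steps you add beyond this do not hold.

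First, ``no nonzero cross terms'' is false. For $a\in M_i$ and $b\in M_j$ with $i\neq j$, the product $a*b$ is a basis element of $M_i$. So $k[M_i]*k[M_j]\neq 0$ and $k[X(M)]*k[M_i]\not\subseteq k[M_i]$. The $k[M_i]$ are right ideals only, and $k[X(M)]=\bigoplus_i k[M_i]$ splits $k[X(M)]$ as a $k$-module, not as an algebra. That splitting comes simply from $X(M)$ being a disjoint union of sets; the idempotents $e_i$ of Proposition~\ref{prop-id} act on $M$, not on $k[X(M)]$.

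Second, the simplicity step fails, even in your graded form. A subset of a quandle $Q$ stable under all $R_b\colon x\mapsto x*b$ and their inverses is a union of orbits of $\mathrm{Inn}(Q)=\langle R_b\rangle$. So the minimal right ideals are these orbits, and a fibre is usually a union of several of them. In the class (ii) quandle of Example~\ref{eg5.1.0}, every $x$ in the $t=1$ fibre satisfies $x*y=x$ for all $y$, so each such singleton is a right ideal. Even the univariate $\Z_6[t^{\pm1}]/(t-5)$ splits into the orbits $\{0,2,4\}$ and $\{1,3,5\}$, which are the cosets of $(1-t)M$.

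So the only provable content is your fallback, and it is not special to multivariate quandles: every quandle is the disjoint union of its $\mathrm{Inn}(Q)$-orbits, each closed under right multiplication. In any ring-theoretic sense the claim is actually false. Take the same class (ii) example over a field $k$, and let $x_1,x_2,x_3$ be the $t=1$ fibre. Then $z*x_j=z$ and $x_j*z=x_j$ for every $z\in X(M)$. Hence $N=\mathrm{span}\{x_1-x_2,\,x_2-x_3\}$ satisfies $k[X(M)]*N=0$, and $u*z=u$ for every $u\in N$. Thus $N$ is a nonzero two-sided ideal with $N*N=0$. That is impossible in a direct sum of simple ideals, where every ideal $I$ is a sum of summands and so $I*I=I$; the algebra has a nonzero nilpotent ideal.

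You should therefore drop both the cross-term claim and the minimality claim. State the result for what it is: $X(M)$ is a disjoint union of right-closed fibres, forced by $t_{(a*b)}=t_a$. That is exactly, and only, what the paper's proof establishes.
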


\begin{proof}  
Multivariate Alexander quandles necessarily satisfy $t_{(a * b)} = t_a$, which is a necessary condition for the construction of multivariate quandles when the abelianization map is generalized to distinguish between link components. Equivalently, this condition also follows from requiring right-self-distributivity of the multivariate quandle operation in eq.~(\ref{pr2}) (see Remark \ref{remtab}).  

Now, notice that $t_{(a * b)} = t_a$ realizes a right ideal on the quandle algebra such that the right-action via quandle multiplication preserves the $t$-variable of the associated ideal. Recall that each element in the module comes associated to a specific $t$. Since right action preserves $t$-variables, it realizes an ideal of the algebra defined by the corresponding $t$. The entire algebra is a disjoint union of elements labelled by particular $t$'s. Hence, the algebra decomposes into a disjoint union of proper ideals (including possible $1$-dimensional ideals).
\end{proof}

\section{Quiver Presentation }

Given how multivariate Alexander modules consisting of a collection of $\Z$-modules yield well-defined quandle algebras, one may ask whether this process of composing quandles of lower orders to generate larger quandle algebras suggests an underlying quiver structure for multivariate quandles.   

First, let us note the definition of a quiver (originating in Gabriel's seminal paper  \cite{gabriel1972unzerlegbare}; see also \cite{crawley1992lectures}):
\begin{definition}
A \emph{Quiver} is a finite directed graph written as a quadruple: ${\cal Q} = \{ V, E, \operatorname{sr}, \operatorname{tr} \}$, where $V$ is a finite set of vertices, $E$ is a finite set of edges (arrows), and $\operatorname{sr}, \operatorname{tr} : E \to V$ are maps assigning to each edge its source and target nodes respectively.  
\end{definition}

Now let us see how a multivariate quandle could inherit a quiver presentation. For the vertices, we take the  $\Z$-modules $M_i$ within the disjoint union. The edges of the quiver are specified by quandle actions, which we define as follows:  
\begin{definition}  
A quandle action on a $\Z$-module $M_i$ is a map $Q_{i j} : M_i \to M_j$ for some $j$, specified by 
\[  * : M_j \times M_i \to M_j    \]
where * denotes the standard quandle operation.
\end{definition}  
In fact, the entries in the off-diagonal blocks (those indexed $j\,i$) of the multivariate Alexander quandles above, are precisely generated by the action  $Q_{i j}$.  Note however, that in general, the matrix elements $Q_{i j}$ do not always generate a quandle algebra, except when the modules $M_i$ and $M_j$ are of identical order. Following our construction of multivariate Alexander quandles, let us make two observations:   
\begin{remark} 
(i) Associated to every Alexander quandle (of arbitrary number of variables), there is an underlying quiver presentation ${\cal Q}$, whose vertices are $\Z$-modules $M_i$, and directed edges are quandle actions $Q_{i j}$. 

(ii) Conceptually, the quiver presentation of a quandle, makes explicit, the oidification of Alexander modules.   
\end{remark} 
Note that the quiver presentation of a quandle is not one-to-one, but merely  serves as a useful tool for elucidating composability of quandles from those of lower orders (likewise, the reducibility of quandles into those of lower orders). 

\begin{example}
As an example, the following diagram (Fig.~(\ref{qfig0})) depicts the quiver presentation of  multivariate Alexander quandles of order 6. All of these 10 quandles involve two $\Z$-modules, hence the structure of the quiver diagram is identical. For multivariate Alexander quandles of more general orders, the associated quiver diagrams will always be complete directed  graphs. 

\begin{figure}[htbp]
\begin{center}
\begin{tikzcd}[
    cells={nodes={draw, circle, minimum size=2.5em}},
    column sep=7em
]
M_1 
  \arrow[r, "Q_{12}", bend left=45]
  \arrow[out=210, in=150, loop, distance=3.5em, "Q_{11}"{above, xshift=-0.7em, yshift=-0.5em}] 
& 
M_2 
  \arrow[l, "Q_{21}", bend left=45]
  \arrow[out=330, in=30, loop, distance=3.5em, "Q_{22}"{above, xshift=0.8em, yshift=-0.5em}]
\end{tikzcd}   
\caption{Quiver presentation of multivariate Alexander quandles with two  $\Z$-modules $M_1$ and $M_2$. }  
\label{qfig0}   
\end{center}  
\end{figure}

\end{example}

\section{Computations }  

\subsection{Enumeration }

Putting the all above constructions to the test, let us now compute a few concrete examples. The groupoid-modules constructed above lead to new isomorphism classes of Alexander quandles, those with multiple $t$ variables. This extends previous enumerations of Alexander quandles discussed in    \cite{nelson1, nelson2, lopes2006finite,  vendramin2012, clark2014quandle, higashitani2024classification}.  

For instance, let us explicitly enumerate all possible Alexander quandles of order 6. In the case of a single $t$-variable, there are only 2 non-isomorphic Alexander quandles, respectively based upon modules:  
\[  \mathbb{Z}_6 [t, \, t^{-1}] / (t-1)  \qquad  \mbox{and}  \qquad   \mathbb{Z}_6 [t, \, t^{-1}] / (t-5)  \]
On the other hand, when multiple $t$-variables are included, we find new examples of non-isomorphic multivariate Alexander quandles of order 6, described below.

\begin{example}
\label{eg5.1.0}
 Firstly, we consider the disjoint union   
\[  \mathbb{Z}_3 [t, \, t^{-1}]  / (t-1) \; \bigsqcup  \; \mathbb{Z}_3 [t, \, t^{-1}]  / (t-2) \] 
The elements of this tagged set $X(M)$ are:  
$$ (0,0),  (0,1),  (0,2),  (1,0),  (1,1),  (1,2)  $$ 
with the assignment $t =1$ for all elements with $a_1 = 0$, and $t =2$ for all elements with $a_1 = 1$. 

For this case, we find three non-isomorphic quandles:  
\begin{enumerate}
    \item   A quandle of solution $class \, (i)$ corresponding to the operations in eqs.~(\ref{op1}) and (\ref{op2}), with $T$-matrix:    
    \[  T =  \begin{pmatrix}
    1 & 2  \\
    1 & 2
\end{pmatrix}     \]  
and Cayley table:   
$$  \left(  \begin{array}{cccccc}   
 1 & 1 & 1 & 1 & 3 & 2 \\
 2 & 2 & 2 & 3 & 2 & 1 \\
 3 & 3 & 3 & 2 & 1 & 3 \\
 4 & 4 & 4 & 4 & 6 & 5 \\
 5 & 5 & 5 & 6 & 5 & 4 \\
 6 & 6 & 6 & 5 & 4 & 6 
\end{array}  \right)    $$
where, for purposes of presenting the Cayley table, it is convenient to map the elements of $X(M)$ to integers from $1$ to $6$. It is of course straightforward to check that the table in this form satisfies all three quandle axioms. 

Furthermore, note that a symmetric swap of the values of the $t$-parameters in $T$, such that $1 \leftrightarrow 2$, does not produce a new quandle - it merely gives a $T$-matrix that yields an isomorphic quandle to this one. 

\item  A quandle of solution $class \, (ii)$ where quandle multiplication is given  by the $T$-matrix:   
    \[  T =  \begin{pmatrix}
    1 & 1  \\
    1 & 2
\end{pmatrix}     \]  
leading to the Cayley table:
$$  \left(  \begin{array}{cccccc}  
 1 & 1 & 1 & 1 & 1 & 1 \\
 2 & 2 & 2 & 2 & 2 & 2 \\
 3 & 3 & 3 & 3 & 3 & 3 \\
 4 & 4 & 4 & 4 & 6 & 5 \\
 5 & 5 & 5 & 6 & 5 & 4 \\
 6 & 6 & 6 & 5 & 4 & 6
\end{array}  \right)   $$

\item   A quandle of solution $class \, (iii)$, corresponding to the operations in eqs.~(\ref{op5}) and (\ref{op6}). The Cayley table for this is:  
$$  \left(  \begin{array}{cccccc}  
    1 & 1 & 1 & 1 & 1 & 1 \\
    2 & 2 & 2 & 3 & 3 & 3 \\
    3 & 3 & 3 & 2 & 2 & 2 \\
    4 & 6 & 5 & 4 & 6 & 5 \\
    5 & 4 & 6 & 6 & 5 & 4 \\
    6 & 5 & 4 & 5 & 4 & 6
\end{array}  \right)    $$

\end{enumerate}

\end{example}

\begin{example}
Next, we consider the disjoint union  
$$\mathbb{Z}_3 [t, \, t^{-1}] / (t-2) \; \bigsqcup  \; \mathbb{Z}_3 [t, \, t^{-1}]  / (t-2)$$  
which gives a quandle of $class \, (ii)$, whose Cayley table is obtained by fusing 2 copies of quandles over $\mathbb{Z}_3 [t, \, t^{-1}]  / (t-2)$ (along the two diagonal blocks of the Cayley table), and 2 copies of the trivial quandle over mod $3$ (along the two off-diagonal blocks) as follows:  
\[  \left(\begin{array}{c|c}   
\;\;   \mathbb{Z}_3 [t, \, t^{-1}]  / (t-2)  \;\;  & \;\;  \mathbb{Z}_3 [t, \, t^{-1}]  / (t-1) \;\;  \\  &  \\  \hline  \\  
\mathbb{Z}_3 [t, \, t^{-1}]  / (t-1) & \mathbb{Z}_3 [t, \, t^{-1}]  / (t-2)
\end{array}\right)       \]    
This yields   
$$
\left(
\begin{array}{cccccc}
 1 & 3 & 2 & 1 & 1 & 1 \\
 3 & 2 & 1 & 2 & 2 & 2 \\
 2 & 1 & 3 & 3 & 3 & 3 \\
 4 & 4 & 4 & 4 & 6 & 5 \\
 5 & 5 & 5 & 6 & 5 & 4 \\
 6 & 6 & 6 & 5 & 4 & 6 \\
\end{array}
\right)
$$

\end{example}

\begin{example}
Moving further, we also have disjoint unions consisting of non-identical orders.  The first of these is  
$$\mathbb{Z}_2  [t, \, t^{-1}]   / (t-1) \;  \bigsqcup  \; \mathbb{Z}_4  [t, \, t^{-1}]   / (t-3)$$   
which gives two non-isomorphic quandles: 
\begin{enumerate}
    \item   One of $class \, (ii)$, with Cayley table:   
   $$  \left(  \begin{array}{cccccc}  
 1 & 1 & 1 & 1 & 1 & 1 \\
 2 & 2 & 2 & 2 & 2 & 2 \\
 3 & 3 & 3 & 5 & 3 & 5 \\
 4 & 4 & 6 & 4 & 6 & 4 \\
 5 & 5 & 5 & 3 & 5 & 3 \\
 6 & 6 & 4 & 6 & 4 & 6
\end{array}  \right)    $$

\item And, the other referring to $class \, (iii)$, with Cayley table:   
   $$  \left(  \begin{array}{cccccc} 
 1 & 1 & 1 & 1 & 1 & 1 \\
 2 & 2 & 2 & 2 & 2 & 2 \\
 3 & 5 & 3 & 5 & 3 & 5 \\
 4 & 6 & 6 & 4 & 6 & 4 \\
 5 & 3 & 5 & 3 & 5 & 3 \\
 6 & 4 & 4 & 6 & 4 & 6  \\
\end{array}  \right)    $$

\end{enumerate}  
The off-diagonal blocks of the above two quandles are now rectangular, and hence not complete quandles by themselves. Nonetheless, these tables  satisfy the required axioms. 

\end{example}

\begin{example}
Then, we have 
$$\mathbb{Z}_2  [t, \, t^{-1}]   / (t-1) \; \bigsqcup  \; \mathbb{Z}_2  [t, \, t^{-1}]   / (t^2 + t + 1)$$  
which gives a quandle of $class \, (ii)$, where the diagonal blocks are the corresponding quandles of orders $2$ and $4$ respectively, and the off-diagonal blocks are identical to those seen above in the $class \, (ii)$  solution of  $\mathbb{Z}_2  [t, \, t^{-1}]   / (t-1) \; \bigsqcup  \; \mathbb{Z}_4  [t, \, t^{-1}]   / (t-3)$. The Cayley table is:     
$$  \left(
\begin{array}{cccccc}
 1 & 1 & 1 & 1 & 1 & 1 \\
 2 & 2 & 2 & 2 & 2 & 2 \\
 3 & 3 & 3 & 5 & 6 & 4 \\
 4 & 4 & 6 & 4 & 3 & 5 \\
 5 & 5 & 4 & 6 & 5 & 3 \\
 6 & 6 & 5 & 3 & 4 & 6 \\
\end{array}
\right)  $$
Note that there are no quandles of $class \, (iii)$  when one of the factors of the disjoint union originates from a module whose invertible element $t$ remains a variable. 

\end{example}

\begin{example}
Finally, we have 
$$\mathbb{Z}_1  [t, \, t^{-1}]   / (t-1) \; \bigsqcup  \; \mathbb{Z}_5  [t, \, t^{-1}]   / (t - k)$$  
which gives three non-isomorphic quandles of $class \, (ii)$, for $k = 2, 3, 4$ respectively. In this case, the $class \, (iii)$ quandles are identical to those of $class \, (ii)$. We get the following Cayley tables:  

$$  \left(
\begin{array}{cccccc}
 1 & 1 & 1 & 1 & 1 & 1 \\
 2 & 2 & 6 & 5 & 4 & 3 \\
 3 & 4 & 3 & 2 & 6 & 5 \\
 4 & 6 & 5 & 4 & 3 & 2 \\
 5 & 3 & 2 & 6 & 5 & 4 \\
 6 & 5 & 4 & 3 & 2 & 6 \\
\end{array}
\right)   $$

$$  \left(
\begin{array}{cccccc}
 1 & 1 & 1 & 1 & 1 & 1 \\
 2 & 2 & 5 & 3 & 6 & 4 \\
 3 & 5 & 3 & 6 & 4 & 2 \\
 4 & 3 & 6 & 4 & 2 & 5 \\
 5 & 6 & 4 & 2 & 5 & 3 \\
 6 & 4 & 2 & 5 & 3 & 6 \\
\end{array}
\right)   $$  

$$  \left(
\begin{array}{cccccc}
 1 & 1 & 1 & 1 & 1 & 1 \\
 2 & 2 & 4 & 6 & 3 & 5 \\
 3 & 6 & 3 & 5 & 2 & 4 \\
 4 & 5 & 2 & 4 & 6 & 3 \\
 5 & 4 & 6 & 3 & 5 & 2 \\
 6 & 3 & 5 & 2 & 4 & 6 \\
\end{array}
\right)   $$ 

\end{example}

\begin{remark} 
In summary, for order 6, we find a total of 12 non-isomorphic Alexander quandles, with 10 of those being multivariate quandles. This strongly suggests the need for an updated literature classification of Alexander quandles of various orders, which includes new examples coming from the multivariate kind.     
\end{remark}

\subsection{Link Colorings }

New isomorphism classes of quandles imply new coloring invariants for knots and links. As a concrete example let us consider the \emph{Whitehead link} (shown in Fig.~(\ref{figwhl1})), for which we show that the multivariate Alexander quandles of order 6 introduced above, generate new classes of non-trivial colorings. 
\begin{figure}[htbp]
\centering
\includegraphics[width=0.4\textwidth]{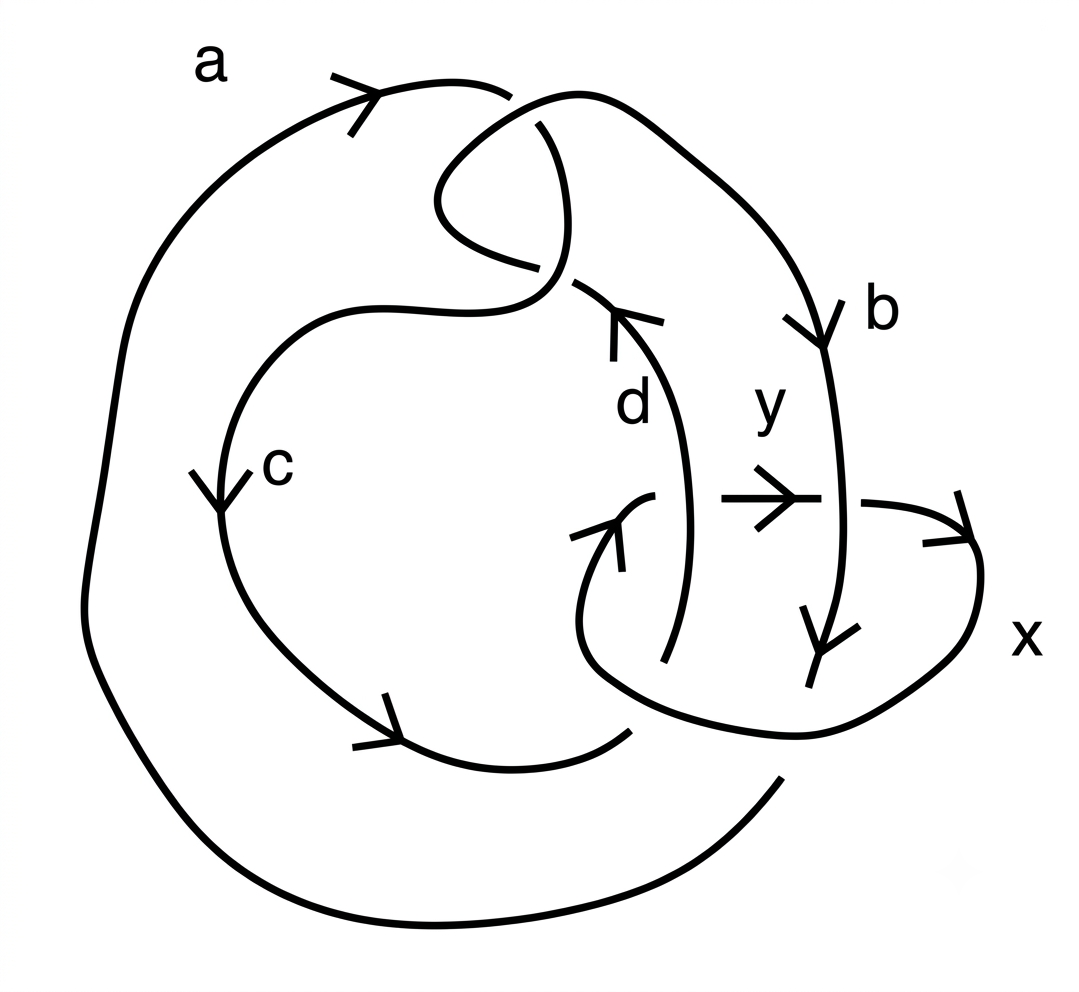}
\caption{The Whitehead link diagram with 6 crossings and 6 arcs. }
\label{figwhl1}
\end{figure}
For the arc-labeled link diagram in Fig.~(\ref{figwhl1}), we have the following quandle equations at the 6 crossings:
\begin{eqnarray*}
&a \;\bar{*}\; b \,=\, c&  \qquad  \,  d \; \bar{*} \; c \;=\, b   \qquad  \quad  c \;\bar{*}\; x \,=\, d \\ 
 &x \;\bar{*}\; d \,=\, y&  \qquad   y \,*\, b \,=\, x    \qquad    \quad  \!\!  b \,*\, x \,=\, a
\end{eqnarray*}
We now solve these equations for the colorings of the Whitehead link using the multivariate quandle of order 6 presented in Example~\ref{eg5.1.0}$(iii)$ (the one referring to solution $class \, (iii)$). Recall that this quandle is based on the following disjoint union  
$\mathbb{Z}_3 [t, \, t^{-1}]  / (t-1) \; \bigsqcup  \; \mathbb{Z}_3 [t, \, t^{-1}]  / (t-2)$  
and uses the operations given in eqs.~(\ref{op5}) and (\ref{op6}). 

We find 30 distinct coloring solutions for the 6 arcs of the Whitehead link. These are shown in Table~\ref{tabc1}, where each 6-tuple is a solution that consists of color labels numbered from 1 to 6, and ordered from left to right to denote colors of the 6 arcs $a, b, c, d, x, y$ respectively. For illustrative purposes, we also display these solutions using the barcodes shown in Fig.~(\ref{figwhl2}).

\begin{table}[htbp]
\centering 
\begin{tabular}{|c|} 
\hline   
$\begin{array}{c|c|c}
  \{1, 1, 1, 1, 1, 1\} & \{2, 2, 2, 2, 1, 1\} & \{3, 3, 3, 3, 1, 1\} \\   
  \{4, 4, 4, 4, 1, 1\} & \{5, 5, 5, 5, 1, 1\} & \{6, 6, 6, 6, 1, 1\} \\  
  \{1, 1, 1, 1, 2, 2\} & \{2, 2, 2, 2, 2, 2\} & \{3, 3, 3, 3, 2, 2\} \\   
  \{6, 4, 5, 6, 2, 3\} & \{4, 5, 6, 4, 2, 3\} & \{5, 6, 4, 5, 2, 3\} \\  
  \{1, 1, 1, 1, 3, 3\} & \{2, 2, 2, 2, 3, 3\} & \{3, 3, 3, 3, 3, 3\} \\
  \{5, 4, 6, 5, 3, 2\} & \{6, 5, 4, 6, 3, 2\} & \{4, 6, 5, 4, 3, 2\} \\
  \{1, 1, 1, 1, 4, 4\} & \{3, 2, 3, 2, 4, 5\} & \{2, 3, 2, 3, 4, 6\} \\
  \{4, 4, 4, 4, 4, 4\} & \{1, 1, 1, 1, 5, 5\} & \{3, 2, 3, 2, 5, 6\} \\
  \{2, 3, 2, 3, 5, 4\} & \{5, 5, 5, 5, 5, 5\} & \{1, 1, 1, 1, 6, 6\} \\
  \{3, 2, 3, 2, 6, 4\} & \{2, 3, 2, 3, 6, 5\} & \{6, 6, 6, 6, 6, 6\} \\
\end{array}$  \\ 
\hline
\end{tabular}
\caption{All 30 coloring solutions of the Whitehead link generated by the order 6 multivariate Alexander quandle of $class \, (iii)$ (Example~\ref{eg5.1.0}$(iii)$).}
\label{tabc1} 
\end{table}

Recall that the total number of coloring solutions, generated by a given quandle, is an invariant under Reidemeister moves (we have indeed checked that an alternate presentation of the Whitehead link with 5 crossings and 5 arcs  produces exactly 30 colorings of the 5 arcs, with this same quandle). Of the 30 colorings in Table~\ref{tabc1}, 18 are trivial solutions (either the whole link carries the same color, each component carries only 1 color), whereas we find 12 proper link colorings generated by this quandle. 

Now let us contrast the above colorings with those obtained by the univariate Alexander quandle of order 6, with $M = \mathbb{Z}_6 [t, \, t^{-1}]  / (t-5)$, for the same link diagram in Fig.~(\ref{figwhl1}). The solutions are shown in Fig.~(\ref{figwhl3}). This quandle only generates 12 colorings of the Whitehead link, and all 12 are trivial. The contrast with multivariate quandles is striking.

\begin{figure}[htbp]
\centering
\includegraphics[width=0.8\textwidth]{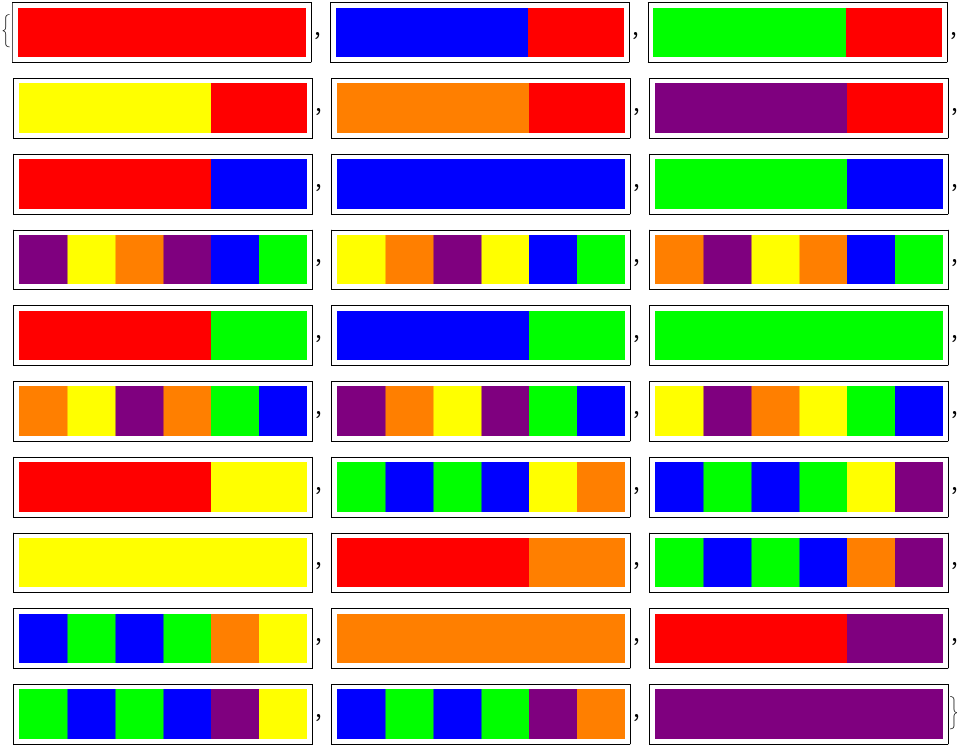}
\caption{Barcode representation of the Whitehead link colorings in Table~\ref{tabc1}. The color map is as follows: 1 $\to$ Red, 2 $\to$ Blue, 3 $\to$ Green, 4 $\to$ Yellow, 5 $\to$ Orange, 6 $\to$ Purple.}
\label{figwhl2}
\end{figure}

\begin{figure}[htbp!]
\centering
\includegraphics[width=0.8\textwidth]{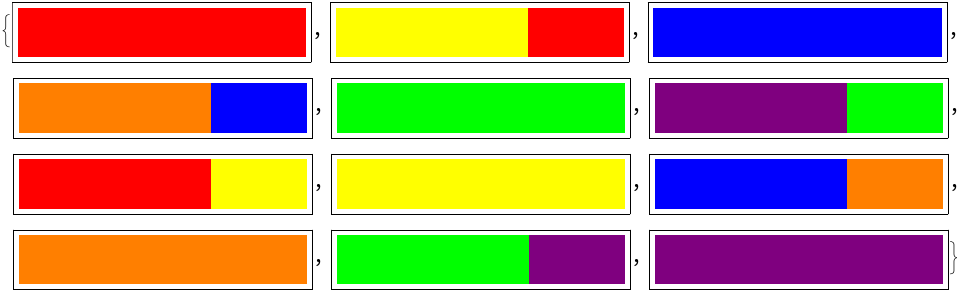}
\caption{Barcode representation (using the same color map as in Fig.~(\ref{figwhl2})) of the Whitehead link colorings (based on the diagram Fig.~(\ref{figwhl1})) generated by  the univariate Alexander quandle of order 6, with $M = \mathbb{Z}_6 [t, \, t^{-1}]  / (t-5)$. }
\label{figwhl3}
\end{figure}

\section{Conclusions and Discussion}  

To conclude, here we have introduced a new structural framework for  constructing (finite) multivariate Alexander quandles based on groupoid rings and algebroids. Our groupoids consist of a disjoint union of delooping groupoids of deck groups. Elements of this  groupoid are tagged by a label referring to the vertex group to which they belong. With this, we demonstrated how  groupoid rings become a natural extension to group rings used in classical knot topology. A manifestly categorical description of our groupoid ring was identified as a  $\Z$-algebroid. The two structure turn out to be different presentations of the same module theory. Using the latter, we were led to a  groupoid-based definitions of multivariate Alexander modules and multivariate Alexander quandles. The latter necessitated the category of elements extracted from multivariate modules. We then found three classes of multivariate quandle operations that followed from our definition. These satisfy the three quandle axioms, along with the constraint $t_{(a * b)} = t_a$. The latter also showed up in an alternate derivation we presented, based on Fox calculus with a generalized abelianization map. One of our operations  is similar to the operation proposed in \cite{traldi1},  but unlike  \cite{traldi1}, is independent of the link's presentation. The other two multivariate quandle operations are new quandle operations. With this, we were able to compute explicit instances of multivariate Alexander quandles, adding new examples to the existing literature classification. As a further application, we also demonstrated new link colorings for the Whitehead link.   
 
Multivariate Alexander quandles turn out to be composite structures, built out of quandles of lower orders. Hence, we get a quiver representation, with modules as objects, and quandle actions between modules as morphisms, thus providing constructions for composing several univariate Alexander quandles to obtain new multivariate ones. In this sense, multivariate Alexander quandles  follow naturally from the oidification of Alexander modules\footnote{The oidification of Alexander modules here, already follows from the construction of $\calG (n_1, \ldots, n_m)$. The additional oidification of rings to algebroids is   for the purpose of making definitions manifestly categorical.}.  

Compared to earlier definitions of multivariate Alexander quandles, the groupoid formulation we have proposed here very neatly distinguishes the algebraic machinery of multivariate Alexander quandles from any explicit dependence on the link's presentation. This is particularly advantageous for computing with multivariate quandle algebras directly, rather than via a knot or a link that they may refer to. The main applications  that become immediately amenable to this new framework are enumeration of  new classes of Alexander quandles, and new coloring invariants of links. We have computed explicit examples in this work, showing how the above work.  

Besides serving as a new machinery for computations, one may ask about the topological significance of a groupoid based framework. To answer that, firstly notice, that the groupoid $\calG (n_1, \ldots, n_m)$  we have introduced here, is not by itself a groupoid of deck groups (as objects of the groupoid). Rather, it is a disjoint union of delooping groupoids of deck groups $\bigsqcup_{i=1}^{m} B\Z_{n_i}$, where each $B\Z_{n_i}$ is the one-object groupoid whose automorphism group is $\Z_{n_i}$. Each vertex $i \in Ob({\calG})$ represents a "link component" that can be associated to a  covering space with deck group $\Z_{n_i}$. The delooping groupoid $B\Z_{n_i}$ is thus the categorical encoding of the deck group for component $i$.  $\calG$ then provides a synthetic framework that encodes deck groups in a purely algebraic manner, without invoking the covering space construction that typically gives rise to them. Furthermore, the groupoid ring $\Z[{\cal G}]$  generalizes the role that $\Z[t, t^{-1}]$  plays in classical Alexander theory, but now with multiple independent $t_i$-variables, each acting only on its own component. $\Z[{\cal G}]$ (equivalently, $\Zb[{\cal G}]$) thus provides a structural framework for defining modules over multiple deck groups simultaneously. The topological significance of this is that these structures could, in principle, be derived from a complicated covering space for the full link complement, but the groupoid framework serves as a useful device for organizing independent deck groups in a way that is self-contained and admits a generalization of Alexander quandles to multivariate quandles.

Let us now mention a few open questions following our investigations here. Firstly, the problem of classifying and enumerating finite Alexander quandles becomes open once again. Of course, one can computationally obtain Cayley tables for new multivariate Alexander quandles at low orders, but it would be desirable to have an algebraic (or geometric) way to estimate the number of Alexander quandles at each order. For instance, in recent work \cite{gonzalez2025representations}, Alexander quandles were associated to coherent sheaves constructed from character  varieties over $AGL_1({\mathbb C})$. It would be interesting to investigate whether some such geometric picture of multivariate Alexander quandles might help map the classification problem to an algebraic geometry counting problem.

Then, there is a deeper question of whether the groupoid and $\Z$-algebroid constructions here are indicative of a more general framework based on a   fundamental groupoid of the link complement. The utility of the fundamental groupoid has long been championed by R. Brown \cite{brown1987groups, brown2006topology}, as a geometric foundation for homotopy theory. In the context of link topology, our constructions here are suggestive of a basepoint-free formulation of covering space theory, which may possibly involve Brown's fundamental groupoid.  Additionally, in contrast to Milnor's link group \cite{milnor1954link}, which forgets all knotting data, a link groupoid, might presumably be a means to encode both, knotting and linking invariants.  This raises the interesting question of yet other unknown link invariants that may be  native to groupoid-based frameworks, such as the fundamental groupoid.

 Analogous to state sum invariants arising out of quandle cohomology \cite{carter2003quandle}, a natural extension of that would be to develop the cohomology of multivariate quandles and seek new invariants resulting from multivariate cocycle conditions. 

Finally, notice that the (finite) multivariate Alexander quandles we have constructed here, are all composed of quandles of lower orders, along the  diagonal blocks. The quiver itself is indicative of such a compositional structure. This raises two interesting questions: (i) Do there also exist "irreducible" finite multivariate Alexander quandles, which do not result from compositions of lower order quandles? and, (ii) In the case of infinite quandles, does a counterpart to such a compositional structure exist? The latter would be directly relevant to the fundamental link quandle.



\bibliographystyle{eptcs}
\bibliography{hottrefs.bib}

\end{document}